\title{Segal Spaces, Spans, and Semicategories}
\author{Rune Haugseng}
\address{Norwegian University of Science and Technology (NTNU),
  Trondheim, Norway}
\urladdr{http://folk.ntnu.no/runegha/}
\renewcommand{\eprint}[1]{\IfBeginWith{#1}{arXiv}{\href{https://arxiv.org/abs/#1}{#1}}{\href{#1}{#1}}}
\newcommand{\bbDelta}{\boldsymbol{\Delta}}
\newcommand{\bbSigma}{\boldsymbol{\Sigma}}
\theoremstyle{theorem}
\newtheorem{thm}{Theorem}[section]
\newtheorem{lemma}[thm]{Lemma}
\newtheorem{propn}[thm]{Proposition}
\newtheorem{cor}[thm]{Corollary}
\newtheorem*{thm*}{Theorem}
\newtheorem*{conjecture*}{Conjecture}
\newtheorem*{goal*}{Goal}
\newtheorem*{question*}{Question}
\newtheorem*{prethm*}{Pretheorem}
\theoremstyle{definition}
\newtheorem{defn}[thm]{Definition}
\newtheorem{notation}[thm]{Notation}
\newtheorem{warning}[thm]{Warning}
\newtheorem{remark}[thm]{Remark}
\theoremstyle{remark}
\newcommand{\txt}[1]{\ensuremath{\text{\textup{#1}}}}
\newcommand{\catname}[1]{\txt{#1}}
\newcommand{\Set}{\catname{Set}}
\newcommand{\Cat}{\catname{Cat}}
\newcommand{\CatI}{\catname{Cat}_\infty}
\newcommand{\Alg}{\catname{Alg}}
\newcommand{\Opd}{\catname{Opd}}
\newcommand{\OpdI}{\Opd_{\infty}}
\newcommand{\Fun}{\txt{Fun}}
\newcommand{\Map}{\txt{Map}}
\newcommand{\op}{\txt{op}}
\DeclareMathOperator{\Seg}{Seg}
\DeclareMathOperator{\Span}{Span}
\DeclareMathOperator{\SPAN}{SPAN}
\newcommand{\IFF}{if and only if}
\newcommand{\icat}{$\infty$-category}
\newcommand{\icats}{$\infty$-categories}
\newcommand{\icatl}{$\infty$-categorical}
\newcommand{\iopd}{$\infty$-operad}
\newcommand{\iopds}{$\infty$-operads}
\newcommand{\nsiopd}{non-symmetric $\infty$-operad}
\newcommand{\nsiopds}{non-symmetric $\infty$-operads}
\newcommand{\gnsiopd}{generalized non-symmetric $\infty$-operad}
\newcommand{\gnsiopds}{generalized non-symmetric $\infty$-operads}
\newcommand{\ie}{i.e.\@}
\newcommand{\isoto}{\xrightarrow{\sim}}
\newcommand{\isofrom}{\xleftarrow{\sim}}
\newcommand{\xto}[1]{\xrightarrow{#1}}
\newcommand{\from}{\leftarrow}
\newcommand{\xfrom}[1]{\xleftarrow{#1}}
\newcommand{\blank}{\text{\textendash}}
\newcommand{\id}{\txt{id}}
\newcommand{\simp}{\bbDelta}
\newcommand{\Dop}{\simp^{\op}}
\title{Segal Spaces, Spans, and Semicategories}
\date{\today. This paper was written while the author was employed at
  the IBS Center for Geometry and Physics in a position funded by 
  grant IBS-R003-D1 of the Institute for Basic Science, Republic of Korea.}
\newcommand{\oSPAN}{\overline{\SPAN}}
\newcommand{\SPANp}{\vphantom{\SPAN}\smash{\SPAN}^{+}\!}
\newcommand{\Spanp}{\Span^{+}}
\newcommand{\oSPANp}{\vphantom{\SPAN}\smash{\oSPAN}^{+}\!}
\newcommand{\bbS}{\bbSigma}
\newcommand{\hbbS}{\widehat{\bbS}}
\newcommand{\Dinj}{\simp_{\txt{inj}}}
\newcommand{\Dinjop}{\Dop_{\txt{inj}}}
\newcommand{\Algnu}{\Alg_{\txt{nu}}}
\newcommand{\Algqu}{\Alg_{\txt{qu}}}
\newcommand{\Catnu}{\Cat_{\txt{nu}}}
\newcommand{\Catqu}{\Cat_{\txt{qu}}}
\newcommand{\Segnu}{\Seg_{\txt{nu}}}
\newcommand{\Segqu}{\Seg_{\txt{qu}}}
\newcommand{\CSS}{\txt{CSS}}
\newcommand{\CSSqu}{\CSS_{\txt{qu}}}
\begin{document}

\begin{abstract}
  We show that Segal spaces, and more generally category objects in an
  \icat{} $\mathcal{C}$, can be identified with associative algebras
  in the double $\infty$-category of spans in $\mathcal{C}$. We use
  this observation to prove that ``having identities'' is a property
  of a non-unital $(\infty,n)$-category.
\end{abstract}

\maketitle

\tableofcontents

\section{Introduction}
A ``semicategory'' or non-unital category is a category without
identity morphisms. It is an easy exercise to show that ``having
identities'' is a property of a semicategory, and ``preserving
identities'' is a property of functors of semicategories. More
precisely, the forgetful functor $\Cat \to \txt{Semicat}$ gives an
equivalence between $\Cat$ and a subcategory of $\txt{Semicat}$.

The analogues of this statement for higher categories turn out to be
very useful: To define particular examples of higher categories or
functors between them, it can be extremely convenient to first ignore
the identities and then at the end check that the resulting non-unital
structure has the required property. For $(\infty,1)$-categories
(which we will refer to as \icats{}), such a result is already known:
it is due to Harpaz~\cite{HarpazQUnital} in the context of Segal
spaces\footnote{See \cref{warn:Harpaz} for the precise relation of
  our result to that of Harpaz.}, and for quasicategories it is a combination of work of
Tanaka~\cite{TanakaUnital} and Steimle~\cite{SteimleDeg}. Similar
results have also been proved for other higher-categorical structures,
including $A_{\infty}$-categories (see \cite{LyubashenkoManzyukUnital}
for a comparison of different notions of weak units in this setting)
and monoidal 2-categories~\cite{JoyalKockUnits}.

The goal of the present paper is to show that ``having identities'' is
also a property of $(\infty,n)$-categories for all $n$:
\begin{thm}
  Let $\Seg^{n}(\mathcal{S})$ denote the \icat{} of $n$-fold Segal
  spaces and $\Segnu^{n}(\mathcal{S})$ its non-unital analogue. Then
  the forgetful functor $\Seg^{n}(\mathcal{S}) \to
  \Segnu^{n}(\mathcal{S})$ induces an equivalence
  \[ \Seg^{n}(\mathcal{S}) \isoto \Segqu^{n}(\mathcal{S}) \] where
  $\Segqu^{n}(\mathcal{S}) \subseteq \Segnu^{n}(\mathcal{S})$ is a
  subcategory of \emph{quasi-unital} $n$-fold Segal spaces and
  quasi-unital functors between them.
\end{thm}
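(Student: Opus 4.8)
The plan is to reduce the theorem to a purely algebraic statement---that \emph{having a unit is a property of an associative algebra}---by means of the identification of category objects with associative algebras in the double \icat{} of spans. I would begin with the case $n=1$. A Segal space is a category object in $\mathcal{S}$, and its non-unital variant is a functor $\Dinjop \to \mathcal{S}$ satisfying the Segal condition; under the equivalences $\Seg^{1}(\mathcal{S}) \simeq \Alg(\SPAN(\mathcal{S}))$ and $\Segnu^{1}(\mathcal{S}) \simeq \Algnu(\SPAN(\mathcal{S}))$ the forgetful functor of the theorem is carried to the forgetful functor from unital to non-unital associative algebras in spans. It is crucial here that we work with the \emph{double} \icat{} of spans rather than an endomorphism monoidal \icat{}: the vertical morphisms record maps of object spaces $X_{0} \to Y_{0}$, so that algebra morphisms in $\SPAN(\mathcal{S})$ reproduce exactly the functors of (non-unital) Segal spaces and the mapping spaces on both sides agree.

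The theorem for $n=1$ then follows from three facts about associative algebras in spans. First, the space of unit structures on a fixed non-unital algebra is either empty or contractible: a unit is a map out of the monoidal unit, which in $\SPAN(\mathcal{S})$ is the diagonal span and corresponds to the degeneracy $s_{0}$, and the associativity data pin it down up to a contractible choice. This is what makes unitality a \emph{property}. Second, I would identify the non-emptiness condition: a non-unital algebra admits a unit precisely when it is quasi-unital, i.e.\ when there exist quasi-units---endomorphisms that act invertibly under composition---and this pins down the essential image as $\Segqu^{1}(\mathcal{S})$. Third, at the level of morphisms, a non-unital functor between quasi-unital algebras automatically preserves quasi-units, so ``preserving identities'' is again a property; combined with the first point this yields that $\Alg(\SPAN(\mathcal{S})) \to \Algnu(\SPAN(\mathcal{S}))$ is fully faithful with image the quasi-unital algebras, giving the equivalence $\Seg^{1}(\mathcal{S}) \isoto \Segqu^{1}(\mathcal{S})$.

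For general $n$ I would argue by induction, using the description $\Seg^{n}(\mathcal{S}) \simeq \Seg^{1}(\Seg^{n-1}(\mathcal{S}))$---an $n$-fold Segal space is a category object in $(n-1)$-fold Segal spaces---together with its non-unital counterpart. Applying the $n=1$ result with the base \icat{} $\mathcal{S}$ replaced by $\Seg^{n-1}(\mathcal{S})$ (which still admits the finite limits needed to form $\SPAN$) promotes unitality in the top simplicial direction, while the inductive hypothesis handles the remaining directions. One must then check that quasi-unitality of an $n$-fold object is equivalent to being a category object in quasi-unital $(n-1)$-fold objects, so that the notions $\Segqu$ assemble correctly across levels.

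The main obstacle I anticipate is the morphism-level statement---showing that quasi-unital functors between quasi-unital objects are exactly the non-unital functors preserving quasi-units, so that the relevant mapping spaces coincide---and, relatedly, verifying in the inductive step that quasi-unitality is compatible with iteration of the category-object construction. The uniqueness-of-units argument is comparatively formal once the span identification is in place; it is the bookkeeping of functors and the interaction of the quasi-unitality condition across the $n$ simplicial directions that will require the most care.
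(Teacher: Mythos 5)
Your opening move---identifying (non-unital) Segal spaces with (non-unital) associative algebras in the double \icat{} of spans, with the vertical morphisms supplying the maps of object spaces---is exactly the paper's strategy, as is the reduction of the general case to $n=1$ applied to category objects in an arbitrary \icat{} with finite limits (so that the induction $\Cat^{n} = \Cat(\Cat^{n-1})$ goes through). But there are two genuine problems with the middle of your argument. First, your third ``fact'' is false: a non-unital functor between quasi-unital objects does \emph{not} automatically preserve quasi-units. Already for discrete objects, the semifunctor from the terminal category to the one-object category on the monoid $\{1,e\}$ with $e^{2}=e$ that sends the identity to $e$ is a map of non-unital Segal spaces between quasi-unital ones which is not quasi-unital (since $e$ is not a unit). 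If your claim held, $\Segqu^{n}(\mathcal{S})$ would be a \emph{full} subcategory of $\Segnu^{n}(\mathcal{S})$, contradicting the very statement you are proving, which restricts to quasi-unital functors. Preserving units is a property of a morphism in the sense that the lift is unique when it exists, not in the sense that it always exists.

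Second, and more structurally: your three facts are asserted for algebras in the double \icat{} $\SPANp(\mathcal{S})$, but the theorem you would want to quote (Lurie's $\Alg(\mathcal{V}) \isoto \Algqu(\mathcal{V})$, \cite{HA}*{Theorem 5.4.3.5}) is only available for a \emph{monoidal} \icat{} $\mathcal{V}$, where there are no vertical morphisms; proving it directly for double \icats{} is essentially the whole content of the theorem and cannot be waved through via ``the associativity data pin it down.'' The missing idea in the paper is the reduction from the double-categorical statement to the monoidal one: the evaluation functors $\txt{ev}_{[0]}$ from $\Cat(\mathcal{C})$, $\Catnu(\mathcal{C})$ and $\Catqu(\mathcal{C})$ to $\mathcal{C}$ are all cartesian fibrations, the forgetful functor preserves cartesian morphisms, and a morphism is quasi-unital \IFF{} its cartesian factorization $X \to \phi_{0}^{*}Y$ is. One may therefore check the equivalence fibrewise over each object space $X$, and the fibre is precisely $\Algnu(\mathcal{C}^{\otimes}_{/X,X})$ for the monoidal \icat{} of spans from $X$ to itself, where Lurie's theorem applies verbatim. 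Without this fibration argument (or an independent proof of the double-categorical unitality theorem), your outline does not close.
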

We will prove this in \cref{sec:qucat} by first proving the case
$n = 1$ for category objects (or internal \icats{}) in any \icat{}
with finite limits; the general statement then follows easily by
iterating this case.

In the case $n = 1$ we will deduce the theorem from the analogous
statement for non-unital associative algebras in monoidal \icats{},
which has been proved by Lurie~\cite{HA}. To do so, we must first identify
category objects as certain associative algebras. For ordinary
categories, it seems to have been first observed by
B\'enabou~\cite{BenabouBicat} that a category can be viewed as an associative
algebra (or monad) in a 2-category $\Spanp(\Set)$ of spans of sets;
this has
\begin{itemize}
\item sets as objects,
\item spans $I \from {S} \to J$ as 1-morphisms from $I$ to $J$, with
  composition given by taking pullbacks, \ie{}
  \[
  \begin{tikzcd}[cramped,row sep=small,column sep=tiny]
    {} & S \arrow{dl} \arrow{dr} \\
    J & & K
  \end{tikzcd}
  \!\circ\!
  \begin{tikzcd}[cramped,row sep=small,column sep=tiny]
    {} & T \arrow{dl} \arrow{dr} \\
    I & & J
  \end{tikzcd}
  :=
  \begin{tikzcd}[cramped,row sep=small,column sep=tiny]
    {} & T \times_{J}S \arrow{dl} \arrow{dr} \\
    I & & K,
  \end{tikzcd}
\]
\item and morphisms of spans
  \[
    \begin{tikzcd}[column sep=small]
      {} & S \arrow{dl} \arrow{drr} \arrow{r} &  S' \arrow[crossing
      over]{dll} \arrow{dr} \\
      I & & & J
    \end{tikzcd}
  \]
  as 2-morphisms, composing in the obvious way.
\end{itemize}
In particular, a category with $S$ as its set of objects is the same
thing as an associative algebra in the ``double slice'' $\Set_{/S,S}$
with the tensor product defined by pullbacks over $S$. However,
functors are not the same thing as morphisms of algebras in
$\Spanp(\Set)$. To remedy this, we can upgrade to a double category
$\SPANp(\Set)$ whose objects are sets, vertical morphisms are
functions, horizontal morphisms are spans, and whose squares are
diagrams of the form
\[
  \begin{tikzcd}[sep=small]
    \bullet \arrow{d} & \bullet \arrow{d} \arrow{r} \arrow{l} & \bullet\arrow{d} \\
    \bullet & \bullet  \arrow{r} \arrow{l} & \bullet.
  \end{tikzcd}
\]
(This example of a double category is discussed in some detail in
\cite{GrandisPareLimits}*{\S 3.2}; the earliest reference is perhaps
\cite{BurroniTCat}*{Remarque on p.~294}.) We can then consider
associative algebras (also known as monoids or monads) in the double
category $\SPANp(\Set)$, which are the same thing as algebras in its
horizontal 2-category and so again give categories. However, the
vertical morphisms give a new notion of morphisms of algebras, which
in this case recovers functors betwen categories; thus $\Cat$ is
equivalent to the category of associative algebras in
$\SPANp(\Set)$. (This observation can be found in
\cite{LeinsterHigherOpds}*{Example 5.3.5},
\cite{ShulmanFramed}*{Example 11.2}, and
\cite{FioreGambinoKock}*{Example 2.6}.) In \cref{sec:span} we prove an
\icatl{} version of this statement, using the double \icat{} of spans
constructed in \cite{spans}:
\begin{thm}
  Let $\mathcal{C}$ be an \icat{} with finite limits. There is an
  equivalence of \icats{}
  \[ \Cat(\mathcal{C}) \simeq \Alg(\SPANp(\mathcal{C}))\]
  between category objects in $\mathcal{C}$ and associative algebras
  in the double \icat{} of spans in $\mathcal{C}$.
\end{thm}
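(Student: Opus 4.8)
The plan is to unwind both sides into explicit diagram \icats{} and compare them directly, the guiding principle being the classical picture from the introduction: a category object $Y_\bullet$ should correspond to the algebra whose underlying horizontal endomorphism is the span $Y_0 \xleftarrow{d_1} Y_1 \xrightarrow{d_0} Y_0$, with multiplication the composition map $Y_2 \to Y_1$ and unit the degeneracy $Y_0 \to Y_1$. First I would fix the definition of $\Alg$ of a double \icat{} $\mathbb{D}\colon\Dop\to\CatI$ as the \icat{} of sections of the unstraightening $\int\mathbb{D}\to\Dop$ that carry inert morphisms to coCartesian ones (``Segal sections''); in the one-object case this recovers Lurie's associative algebras in a monoidal \icat{}, and in general the underlying-object functor $\Alg(\mathbb{D})\to\mathbb{D}_0$ sends an algebra to its object of objects. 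For $\mathbb{D}=\SPANp(\mathcal{C})$ I would use the twisted-arrow model underlying the construction of \cite{spans}: writing $\mathrm{Tw}[n]$ for the twisted-arrow category of $[n]$, one has $\SPANp(\mathcal{C})_n\simeq\Fun^{\mathrm{pb}}(\mathrm{Tw}[n],\mathcal{C})$, the \icat{} of $\mathrm{Tw}[n]$-diagrams whose composition squares are pullbacks, with $\SPANp(\mathcal{C})_0\simeq\mathcal{C}$. Straightening through this model identifies $\Alg(\SPANp(\mathcal{C}))$ with the full subcategory of $\Fun(\mathbb{T},\mathcal{C})$, where $\mathbb{T}=\int_{\simp}\mathrm{Tw}$ is the total twisted-arrow category, spanned by the functors satisfying the pullback condition (from spans) together with the Segal condition coming from the inert-coCartesian requirement.

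Next I would set up the comparison as a restriction of diagrams. The ``length'' functor $E\colon\mathbb{T}\to\Dop$, $([n],(i\le j))\mapsto[j-i]$, and the diagonal $\delta\colon\Dop\to\mathbb{T}$, $[n]\mapsto([n],(0\le n))$, satisfy $E\circ\delta\simeq\id$, so $\delta^{*}E^{*}\simeq\id$ on $\Fun(\Dop,\mathcal{C})$. Restriction along $E$ is exactly the assignment $Y_\bullet\mapsto\bigl((i\le j)\mapsto Y_{j-i}\bigr)$ realizing the heuristic above, and restriction along $\delta$ recovers a simplicial object by $\sigma\mapsto\bigl([n]\mapsto\sigma_n(0\le n)\bigr)$. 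The heart of the argument is then that, under $E^{*}$, the pullback squares and the inert equivalences defining an algebra are literally the Segal squares $Y_n\isoto Y_1\times_{Y_0}\cdots\times_{Y_0}Y_1$: thus $E^{*}$ carries category objects into $\Alg(\SPANp(\mathcal{C}))$, and conversely $\delta^{*}$ carries algebras to category objects. Together with $\delta^{*}E^{*}\simeq\id$ and the reconstruction $E^{*}\delta^{*}\simeq\id$ on algebras discussed below, this makes $E^{*}$ and $\delta^{*}$ mutually inverse equivalences between $\Cat(\mathcal{C})$ and $\Alg(\SPANp(\mathcal{C}))$, compatibly with the underlying-object functor $\Alg(\SPANp(\mathcal{C}))\to\mathcal{C}$ and the object-of-objects functor $\Cat(\mathcal{C})\to\mathcal{C}$, so the equivalence sits over $\mathcal{C}$. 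Finally, because the \emph{vertical} morphisms of $\SPANp(\mathcal{C})$ are honest maps of $\mathcal{C}$ (this is the role of the ``$+$''), morphisms of algebras are precisely maps of the associated simplicial objects, i.e.\ internal functors, so the comparison is an equivalence of \icats{} and not merely a bijection on equivalence classes.

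The step I expect to be the main obstacle is the reconstruction $E^{*}\delta^{*}\sigma\simeq\sigma$ for an algebra $\sigma$, i.e.\ proving that a Segal section is determined, coherently and uniformly in $[n]$, by its values on the long arrows $(0\le n)$. On objects this is immediate from the pullback conditions, but upgrading it to an equivalence of \icats{} requires controlling the interaction of the two simplicial directions---the $\Dop$ indexing the algebra and the twisted-arrow shapes encoding the spans---which is really the statement that the relevant restriction is a Kan extension, or equivalently a cofinality property of $E$ against the pullback conditions. I would isolate this as a lemma about the combinatorics of $\mathbb{T}$ (essentially the edgewise subdivision of $\simp$) and prove it by induction on $n$, decomposing $\mathrm{Tw}[n]$ along its outer faces so that each new pullback square is forced by the Segal condition already established at lower stages. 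A secondary bookkeeping point is the straightening identification in the first paragraph, where one must match the section description of $\Alg$ with the functor category $\Fun(\mathbb{T},\mathcal{C})$ and check that ``inert-coCartesian'' translates exactly into the Segal condition along the fibers of $E$.
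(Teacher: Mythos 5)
Your proposal is correct and follows essentially the same route as the paper: both unstraighten an algebra in $\SPANp(\mathcal{C})$ to a functor on the total twisted-arrow category (your $\mathbb{T}$ is the paper's $\hbbS$) satisfying the pullback and inert conditions, and both compare with simplicial objects via the length functor $\Pi([n],(i,j))=[j-i]$ and the diagonal $\Psi([n])=([n],(0,n))$. The one step you flag as the main obstacle, $E^{*}\delta^{*}\sigma\simeq\sigma$, is dispatched in the paper without your induction on $\mathrm{Tw}[n]$: there is a natural transformation $\id_{\hbbS}\to\Psi\Pi$ whose component at $([n],(i,j))$ is the cartesian morphism to $([j-i],(0,j-i))$ over the inert $\rho_{i,j}$, and since an algebra inverts exactly these morphisms, $\Pi$ is a localization and $\Pi^{*}$, $\Psi^{*}$ are immediately inverse equivalences on the relevant subcategories.
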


\section{Preliminaries}\label{sec:prelim}
In this section we briefly review the higher-algebraic structures we
will make use of below.

\begin{notation}
  We write $\simp$ for the simplex category of ordered sets $[n] :=
  \{0,\ldots,n\}$. A morphism $\phi \colon [m] \to [n]$ is called
  \emph{inert} if it is the inclusion of a subinterval, \ie{} $\phi(i)
  = \phi(0)+i$ for $i = 0,\ldots,n$. For $0 \leq i \leq j \leq n$ we
  write $\rho_{i,j} \colon [j-i] \to [n]$ for the inert morphism with
  $\rho_{i,j}(0) = i$, $\rho_{i,j}(j-i)= j$.
\end{notation}

\begin{defn}
  Let $\mathcal{C}$ be an \icat{} with pullbacks. A \emph{category
    object} in $\mathcal{C}$ is a simplicial object $X \colon \Dop \to
  \mathcal{C}$ such that for all $[n] \in \simp$ the morphism
  \[ X_{n}\to X_{1} \times_{X_{0}} \cdots \times_{X_{0}} X_{1},\]
  induced by the morphisms $\rho_{i,(i+1)}$ and $\rho_{i,i}$, is an
  equivalence. We write $\Cat(\mathcal{C})$ for the full subcategory
  of $\Fun(\Dop, \mathcal{C})$ spanned by the category
  objects. Category objects in the \icat{} $\mathcal{S}$ of spaces are
  called \emph{Segal spaces} \cite{RezkCSS}.
\end{defn}

\begin{remark}
  Category objects in $\mathcal{C}$ model the algebraic structure of a
  (homotopy-coherent) category internal to $\mathcal{C}$: If we think of $X_{0}$ as the objects
  of $X$ and $X_{1}$ as the morphisms then we have:
  \begin{itemize}
  \item $d_{1},d_{0} \colon X_{1} \to
    X_{0}$, assigning source and target objects to morphisms,
  \item $s_{0} \colon X_{0} \to X_{1}$, assigning identity morphisms
    to objects,
  \item $X_{1} \times_{X_{0}} X_{1} \isofrom X_{2} \xto{d_{1}} X_{1}$,
    assigning composites to composable pairs of morphisms.
  \end{itemize}
  The remaining structure in the simplicial object $X$ ensures that
  the composition is homotopy-coherently associative and unital.
\end{remark}

\begin{remark}
  If $\mathcal{C}$ has a terminal object $*$, we can identify the
  category objects $X \colon \Dop \to \mathcal{C}$ such that $X_{0}
  \simeq *$ with \emph{associative monoids} in $\mathcal{C}$.
\end{remark}

\begin{defn}
  A \emph{double \icat{}} is a cocartesian fibration $\mathcal{M} \to
  \Dop$ such that the corresponding functor $\Dop \to \CatI$ is a
  category object. A \emph{monoidal \icat{}} is a double \icat{}
  $\mathcal{M}$ such that $\mathcal{M}_{0}$ is contractible,
  corresponding to an associative monoid in $\CatI$.
\end{defn}

A double \icat{} is thus an \icatl{} analogue of a category internal
to categories, or a double category. This notion has a useful
generalization:
\begin{defn}
  A \emph{generalized non-symmetric \iopd{}} is a functor $ p \colon \mathcal{O}
  \to \Dop$ such that:
    \begin{enumerate}[(i)]
  \item For every object $X$ in $\mathcal{O}_{n} := \mathcal{O}
    \times_{\Dop} \{[n]\}$, and every inert morphism $\phi \colon [m]
    \to [n]$ in $\simp$, there exists a $p$-cocartesian morphism
    $X \to \phi_{!}X$ lying over $\phi$.
  \item For every object $[n] \in \simp$, the functor
    \[ \mathcal{O}_{n} \to \mathcal{O}_{1} \times_{\mathcal{O}_{0}}
      \cdots \times_{\mathcal{O}_{0}} \mathcal{O}_{1} \]
    induced by the cocartesian morphisms over the maps $\rho_{i,i+1}$
    and $\rho_{i,i}$, is an
    equivalence.
  \item Given $X$ in $\mathcal{O}_{n}$, choose compatible cocartesian
    lifts $X \to X_{i,j}$ over $\rho_{i,j}$.
    Then for any $Y \in
    \mathcal{O}_{m}$, the
    commutative square
    \[
      \begin{tikzcd}
        \Map_{\mathcal{O}}(Y, X) \arrow{r} \arrow{d} &
        \Map_{\mathcal{O}}(Y, X_{0,1})
        \times_{\Map_{\mathcal{O}}(Y,X_{1,1})} \cdots
        \times_{\Map_{\mathcal{O}}(Y, X_{n-1,n-1})}
        \Map_{\mathcal{O}}(Y, X_{n-1,n}) \arrow{d} \\
        \Map_{\Dop}([m], [n]) \arrow{r} & \Map_{\Dop}([m], [1])
        \times_{\Map_{\Dop}([m],[0])} \cdots\times_{\Map_{\Dop}([m],[0])}\Map_{\Dop}([m], [1])
      \end{tikzcd}
      \]
      is cartesian.
  \end{enumerate}
\end{defn}


\begin{remark}
  Generalized non-symmetric \iopds{} are an \icatl{} analogue of the
  \emph{virtual double categories} of \cite{CruttwellShulman} or
  \textbf{fc}\emph{-multicategories} of \cite{LeinsterGenEnr}; see
  \cite{enr}*{\S 2} for further discussion and motivation. We can
  identify the double \icats{} as the generalized non-symmetric
  \iopds{} that are cocartesian fibrations.
\end{remark}

\begin{defn}
  A \emph{non-symmetric \iopd{}} is a generalized non-symmetric
  \iopd{} $\mathcal{O}$ such that $\mathcal{O}_{0}$ is contractible.
\end{defn}

\begin{defn}
  Let $\mathcal{O}$ be a generalized non-symmetric \iopd{}, and
  suppose $\mathcal{C}$ is an \icat{} with pullbacks. A \emph{Segal
    $\mathcal{O}$-object} in $\mathcal{C}$ is a functor $\Phi \colon
  \mathcal{O} \to \mathcal{C}$ such that for all $X \in
  \mathcal{O}_{n}$ the natural map
  \[ \Phi(X) \to \Phi(X_{0,1}) \times_{\Phi(X_{1,1})} \cdots
    \times_{\Phi(X_{n-1,n-1})} \Phi(X_{n-1,n})\]
  is an equivalence, where $X \to X_{i,j}$ is a cocartesian morphism
  over $\rho_{i,j}$. We write $\Seg_{\mathcal{O}}(\mathcal{C})$ for
  the full subcategory of $\Fun(\mathcal{O},\mathcal{C})$ spanned by
  the Segal $\mathcal{O}$-objects.
\end{defn}


\begin{lemma}\label{i0RKE}
  Let $\mathcal{C}$ be an \icat{} with finite limits and $\mathcal{O}$
  a \gnsiopd{}, and let $i_{\mathcal{O}}$ denote the inclusion
  $\mathcal{O}_{0} \hookrightarrow \mathcal{O}$. Then right Kan
  extension along $i_{\mathcal{O}}$ gives a functor
  \[ i_{\mathcal{O},*} \colon \Fun(\mathcal{O}_{0}, \mathcal{C}) \to
    \Seg_{\mathcal{O}}(\mathcal{C}), \]
  right adjoint to  the restriction $i_{\mathcal{O}}^{*} \colon
  \Seg_{\mathcal{O}}(\mathcal{C}) \to \Fun(\mathcal{O}_{0}, \mathcal{C})$.
\end{lemma}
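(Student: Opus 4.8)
The plan is to prove existence of the right Kan extension by showing it is computed pointwise by a \emph{finite} limit, then to check directly that the resulting functor satisfies the Segal condition, and finally to obtain the adjunction by restricting the standard Kan extension adjunction to the full subcategory $\Seg_{\mathcal{O}}(\mathcal{C})$.

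First I would analyze the comma \icat{} governing the pointwise right Kan extension. For $X \in \mathcal{O}_{n}$, the value $(i_{\mathcal{O},*}F)(X)$ is the limit of $F$ over the \icat{} $\mathcal{O}_{X/} \times_{\mathcal{O}} \mathcal{O}_{0}$ of objects $Z \in \mathcal{O}_{0}$ equipped with a map $X \to Z$. Such a morphism lies over a map $[n] \to [0]$ in $\Dop$, i.e. over a vertex inclusion $\rho_{i,i} \colon [0] \to [n]$ in $\simp$, which is inert; by condition (i) it therefore factors through the cocartesian lift $X \to X_{i,i}$. Since morphisms in this comma \icat{} lie over $\id_{[0]}$, they can only connect objects sitting over the same vertex, and I expect the comma \icat{} to split as a coproduct $\coprod_{i=0}^{n} (\mathcal{O}_{0})_{X_{i,i}/}$ of under-\icats{}. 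Each summand has the initial object $\id_{X_{i,i}}$, so the limit over it is just $F(X_{i,i})$, giving
\[ (i_{\mathcal{O},*}F)(X) \simeq \prod_{i=0}^{n} F(X_{i,i}). \]
Crucially this is a \emph{finite} product, so the limit exists since $\mathcal{C}$ has finite limits; standard results then guarantee that $i_{\mathcal{O},*}$ exists as a functor $\Fun(\mathcal{O}_{0}, \mathcal{C}) \to \Fun(\mathcal{O}, \mathcal{C})$ right adjoint to $i_{\mathcal{O}}^{*}$ and computed by these pointwise limits.

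Next I would verify that $i_{\mathcal{O},*}F$ lands in $\Seg_{\mathcal{O}}(\mathcal{C})$. Applying the formula to the edges $X_{k,k+1} \in \mathcal{O}_{1}$, and using compatibility of the chosen cocartesian lifts to identify their two vertices with $X_{k,k}$ and $X_{k+1,k+1}$, I obtain $(i_{\mathcal{O},*}F)(X_{k,k+1}) \simeq F(X_{k,k}) \times F(X_{k+1,k+1})$ and $(i_{\mathcal{O},*}F)(X_{k,k}) \simeq F(X_{k,k})$. The Segal map for $i_{\mathcal{O},*}F$ at $X$ then becomes the canonical comparison
\[ \prod_{i=0}^{n} F(X_{i,i}) \to \bigl(F(X_{0,0}) \times F(X_{1,1})\bigr) \times_{F(X_{1,1})} \cdots \times_{F(X_{n-1,n-1})} \bigl(F(X_{n-1,n-1}) \times F(X_{n,n})\bigr), \]
whose target is a telescoping iterated fibre product equivalent to $\prod_{i=0}^{n} F(X_{i,i})$; hence the comparison is an equivalence and $i_{\mathcal{O},*}$ factors through $\Seg_{\mathcal{O}}(\mathcal{C})$.

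Finally, since $i_{\mathcal{O},*}$ takes values in the \emph{full} subcategory $\Seg_{\mathcal{O}}(\mathcal{C}) \subseteq \Fun(\mathcal{O}, \mathcal{C})$, the adjunction equivalences $\Map(\Phi, i_{\mathcal{O},*}G) \simeq \Map(i_{\mathcal{O}}^{*}\Phi, G)$ restrict, for $\Phi \in \Seg_{\mathcal{O}}(\mathcal{C})$, to exhibit $i_{\mathcal{O},*}$ as right adjoint to the restriction $i_{\mathcal{O}}^{*} \colon \Seg_{\mathcal{O}}(\mathcal{C}) \to \Fun(\mathcal{O}_{0}, \mathcal{C})$. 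I expect the main obstacle to be the first step: carefully justifying the decomposition of the comma \icat{} into a coproduct of under-\icats{} with initial objects, as this is exactly what collapses the a priori large Kan extension limit down to the finite product that $\mathcal{C}$'s finite limits can compute. Once that identification is secured, both the Segal condition and the adjunction are essentially formal.
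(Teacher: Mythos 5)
Your proposal is correct and follows essentially the same route as the paper: the paper likewise computes the pointwise right Kan extension by observing that the discrete set of cocartesian morphisms $\{X \to X_{i,i}\}$ is coinitial in the comma \icat{} $\mathcal{O}_{0}\times_{\mathcal{O}}\mathcal{O}_{X/}$, obtains the finite product $\prod_{i=0}^{n} F(X_{i,i})$, and restricts the standard Kan extension adjunction to the full subcategory. Your coproduct decomposition into under-\icats{} with initial objects is just a more explicit justification of that coinitiality claim, and your telescoping verification of the Segal condition spells out a step the paper dismisses as clear.
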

\begin{proof}
  For $F \colon \mathcal{O}_{0} \to \mathcal{C}$ and
  $X \in \mathcal{O}_{n}$, we have
  \[i_{\mathcal{O},*}F(X) \simeq \lim_{(Y,X\to Y) \in
      \mathcal{O}_{0}\times_{\mathcal{O}} \mathcal{O}_{X/}} F(Y),\]
  provided this limit exists in $\mathcal{C}$.
  If $X \to X_{i,i}$ is a cocartesian morphism over $\rho_{i,i} \colon
  [0] \to [n]$ then the discrete set $\{X \to X_{i,i}\}$ is a coinitial
  subcategory of $\mathcal{O}_{0}\times_{\mathcal{O}}
  \mathcal{O}_{X/}$, and so we have
  \[ i_{\mathcal{O},*}F(X) \simeq \prod_{i = 0}^{n} F(X_{i,i}),\]
  which exists provided $\mathcal{C}$ has finite limits and clearly
  gives a Segal object. Since the right Kan extension
  $i_{\mathcal{O},*}$ is the right adjoint to
  $i_{\mathcal{O}}^{*} \colon \Fun(\mathcal{O}, \mathcal{C}) \to
  \Fun(\mathcal{O}_{0}, \mathcal{C})$, it follows that the adjunction
  restricts to the full subcategory $\Seg_{\mathcal{O}}(\mathcal{C})$.
\end{proof}

\begin{remark}
  The object $i_{\mathcal{O},*}F$ is the terminal Segal
  $\mathcal{O}$-object whose restriction to $\mathcal{O}_{0}$ is
  $F$. For $\mathcal{C} = \mathcal{S}$, we can think of the object
  $i_{\mathcal{O},*}F(X) \simeq \prod_{i=0}^{n} F(X_{i,i})$ as
  the space of ``labels'' over $\mathcal{O}_{0}$ that a Segal
  $\mathcal{O}$-object $\Phi$ in $\mathcal{S}$ with
  $\Phi|_{\mathcal{O}_{0}} \simeq F$ would assign to points of
  $\Phi(X)$ (with the assignment of such given by the unit map
  $\Phi \to i_{\mathcal{O},*}F$). For example, when $\mathcal{O}$ is
  $\Dop$ (where $(\Dop)_{0}$ is just a point) we have for
  $X \in \mathcal{S} \simeq \Fun((\Dop)_{0}, \mathcal{S})$ that $(i_{\Dop,*}X)_{n} \simeq X^{\times n+1}$;
  if $\Phi \colon \Dop \to \mathcal{S}$ is a Segal space with
  $\Phi_{0} \simeq X$ then we can think of $\Phi_{n}$ as a space of
  strings of $n$ composable morphisms, and the unit map
  $\Phi_{n} \to (i_{\Dop,*}X)_{n} \simeq X^{n+1}$ assigns to such a
  string the list of $n+1$ objects appearing in it.
\end{remark}

\begin{propn}\label{Seg0cart}
  Let $\mathcal{C}$ be an \icat{} with finite limits and $\mathcal{O}$
  a \gnsiopd{}, and let $i_{\mathcal{O}}$ denote the inclusion
  $\mathcal{O}_{0} \hookrightarrow \mathcal{O}$. Then the functor
  \[i_{\mathcal{O}}^{*} \colon \Seg_{\mathcal{O}}(\mathcal{C}) \to
  \Fun(\mathcal{O}_{0}, \mathcal{C})\] is a cartesian fibration. For
  $\Phi \in \Seg_{\mathcal{O}}(\mathcal{C})$ and
  $\eta \colon F \to i_{\mathcal{O}}^{*}\Phi$, the cartesian morphism over
  $\eta$ is given by the pullback
  \[
    \begin{tikzcd}
      \eta^{*}\Phi \arrow{r} \arrow{d} & \Phi \arrow{d} \\
      i_{\mathcal{O},*}F \arrow{r}{i_{\mathcal{O},*}\eta} & i_{\mathcal{O},*}i_{\mathcal{O}}^{*}\Phi
    \end{tikzcd}
  \]
  in $\Fun(\mathcal{O}, \mathcal{C})$.
\end{propn}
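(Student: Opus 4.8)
The plan is to check directly that the displayed pullback furnishes a cartesian lift of $\eta$, so that all the real content is already contained in \cref{i0RKE}. Write $u \colon \id \to i_{\mathcal{O},*}i_{\mathcal{O}}^{*}$ and $\epsilon \colon i_{\mathcal{O}}^{*}i_{\mathcal{O},*} \to \id$ for the unit and counit of the adjunction $i_{\mathcal{O}}^{*} \dashv i_{\mathcal{O},*}$ of \cref{i0RKE}, and set
\[ \eta^{*}\Phi := \Phi \times_{i_{\mathcal{O},*}i_{\mathcal{O}}^{*}\Phi} i_{\mathcal{O},*}F, \]
formed from $u_{\Phi}$ and $i_{\mathcal{O},*}\eta$. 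First I would observe that this pullback lands in $\Seg_{\mathcal{O}}(\mathcal{C})$: the Segal condition is a pointwise limit condition and so is closed under limits in $\Fun(\mathcal{O},\mathcal{C})$, while the three remaining corners are all Segal objects (the two involving $i_{\mathcal{O},*}$ by \cref{i0RKE}). The projection $\eta^{*}\Phi \to \Phi$ is then the proposed cartesian morphism.

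Next I would identify its image under $i_{\mathcal{O}}^{*}$. Since $i_{\mathcal{O}}^{*}$ is a restriction it preserves pullbacks, so it carries the defining square to a pullback square. The counit $\epsilon$ is an equivalence: by the explicit formula $i_{\mathcal{O},*}G(X) \simeq \prod_{i} G(X_{i,i})$ of \cref{i0RKE}, an object $X \in \mathcal{O}_{0}$ contributes the single factor $G(X)$, so $i_{\mathcal{O}}^{*}i_{\mathcal{O},*} \simeq \id$ (equivalently $i_{\mathcal{O}}$ is fully faithful, as $\Map_{\Dop}([0],[0]) = *$). Transporting the pullback square along $\epsilon$, the bottom edge becomes $\eta$ and the right edge $i_{\mathcal{O}}^{*}u_{\Phi}$ becomes the identity of $i_{\mathcal{O}}^{*}\Phi$ by a triangle identity; being a pullback of an equivalence, the left edge $i_{\mathcal{O}}^{*}(\eta^{*}\Phi) \to F$ is then an equivalence and the top edge recovers $\eta$. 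Thus $\eta^{*}\Phi$ lies over $F$ and the projection lies over $\eta$.

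The main step is to show this projection is $i_{\mathcal{O}}^{*}$-cartesian, which I would verify through mapping spaces. For arbitrary $\Psi \in \Seg_{\mathcal{O}}(\mathcal{C})$, applying the limit-preserving functor $\Map(\Psi,-)$ to the defining pullback gives
\[ \Map(\Psi, \eta^{*}\Phi) \simeq \Map(\Psi, \Phi) \times_{\Map(\Psi, i_{\mathcal{O},*}i_{\mathcal{O}}^{*}\Phi)} \Map(\Psi, i_{\mathcal{O},*}F). \]
The adjunction rewrites the two fibre terms as $\Map(i_{\mathcal{O}}^{*}\Psi, i_{\mathcal{O}}^{*}\Phi)$ and $\Map(i_{\mathcal{O}}^{*}\Psi, F)$; under these identifications the leg out of $\Map(\Psi,\Phi)$ (post-composition with $u_{\Phi}$) becomes the action of $i_{\mathcal{O}}^{*}$ on morphisms, and the leg out of $\Map(i_{\mathcal{O}}^{*}\Psi, F)$ becomes post-composition with $\eta$. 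This is precisely the commutative square whose cartesianness defines $\eta^{*}\Phi \to \Phi$ to be an $i_{\mathcal{O}}^{*}$-cartesian lift of $\eta$; since $\Psi$ was arbitrary and every $\eta$ admits such a lift, $i_{\mathcal{O}}^{*}$ is a cartesian fibration.

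I expect the delicate point to be this final matching of squares rather than any of the limit bookkeeping. Concretely, one must check that the left-hand leg coming from the pullback (post-composition with the projection $\pi \colon \eta^{*}\Phi \to i_{\mathcal{O},*}F$, followed by the adjunction equivalence) agrees with the left-hand leg in the definition of a cartesian morphism (the action of $i_{\mathcal{O}}^{*}$ followed by the equivalence $i_{\mathcal{O}}^{*}(\eta^{*}\Phi) \simeq F$ from the previous step). Both unwind, via naturality of $u$ together with the standard description of an adjunct (apply $i_{\mathcal{O}}^{*}$, then compose with $\epsilon$), to the single map $\epsilon_{F} \circ i_{\mathcal{O}}^{*}(\pi)$, so they coincide; keeping these coherences straight is the only genuine care the argument requires.
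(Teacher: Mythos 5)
Your argument is correct and follows essentially the same route as the paper: both construct the cartesian lift as the pullback of $\Phi$ along $i_{\mathcal{O},*}\eta$ over the unit map, and both reduce the key computation to the facts that $i_{\mathcal{O}}^{*}$ preserves limits and that $i_{\mathcal{O}}^{*}i_{\mathcal{O},*}\simeq \id$ because $i_{\mathcal{O}}$ is fully faithful. The only difference is that the paper outsources the verification that this pullback is a cartesian edge, and that having such lifts suffices for $i_{\mathcal{O}}^{*}$ to be a cartesian fibration, to \cite{nmorita}*{Proposition 4.51 and Corollary 4.52}, whereas you check this directly on mapping spaces (including the coherence matching the two descriptions of the left-hand leg) --- a self-contained but equivalent argument.
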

\begin{proof}
  The functor
  $i_{\mathcal{O}}^{*} \colon \Seg_{\mathcal{O}}(\mathcal{C}) \to
  \Fun(\mathcal{O}_{0},\mathcal{C})$ has a right adjoint by \cref{i0RKE}.
  To see that $i_{\mathcal{O}}^{*}$ is a cartesian fibration, we apply
  the criterion of \cite[Corollary 4.52]{nmorita}. We must check that
  for $\Phi \in \Seg_{\mathcal{O}}(\mathcal{C})$ and
  $\eta \colon F \to i^{*}_{\mathcal{O}}\Phi$, if we define
  $\eta^{*}\Phi$ by the pullback square above, then the composite
  $i_{\mathcal{O}}^{*}\eta^{*}\Phi \to
  i_{\mathcal{O}}^{*}i_{\mathcal{O},*}F \to F$ is an
  equivalence.

  Since $i_{\mathcal{O}}$ is fully faithful we have
  $i^{*}_{\mathcal{O}}i_{\mathcal{O},*} \simeq \id$ and as
  $i_{\mathcal{O}}^{*}$ preserves limits we see that
  $i_{\mathcal{O}}^{*}\eta^{*}\Phi$ is the pullback
  \[
  \begin{tikzcd}
    i_{\mathcal{O}}^{*}\eta^{*}\Phi \arrow{r} \arrow{d} & i_{\mathcal{O}}^{*}\Phi \arrow[equals]{d} \\
    F \arrow{r} & i_{\mathcal{O}}^{*}\Phi,
  \end{tikzcd}
  \]
  whence $i_{\mathcal{O}}^{*}\eta^{*}\Phi \to F$ is indeed an
  equivalence. The characterization of cartesian morphisms follows
  from \cite[Proposition 4.51]{nmorita}.
\end{proof}

\begin{defn}
  A morphism of \gnsiopds{} is a commutative triangle
  \[
    \begin{tikzcd}
      \mathcal{O} \arrow{dr} \arrow{rr}{f} & & \mathcal{P} \arrow{dl} \\
       & \Dop,
    \end{tikzcd}
  \]
  where $f$ preserves cocartesian morphisms over inert maps in
  $\Dop$. We write $\OpdI^{\txt{gns}}$ for the \icat{} of \gnsiopds{},
  defined as a subcategory of $\Cat_{\infty/\Dop}$. A morphism of
  \gnsiopds{} from $\mathcal{O}$ to $\mathcal{P}$ is also called an
  \emph{$\mathcal{O}$-algebra} in $\mathcal{P}$, and we write
  $\Alg_{\mathcal{O}}(\mathcal{P})$ for the \icat{} of
  $\mathcal{O}$-algebras in $\mathcal{P}$, defined as a full
  subcategory of $\Fun_{/\Dop}(\mathcal{O},\mathcal{P})$.
\end{defn}

\begin{defn}
  For the terminal (generalized) \nsiopd{} $\Dop$, we refer to
  $\Dop$-algebras in a \gnsiopd{} $\mathcal{O}$ as \emph{associative
    algebras}, and write \[\Alg(\mathcal{O}) := \Alg_{\Dop}(\mathcal{O}).\]
\end{defn}

\begin{defn}
  For $\mathcal{C}$ an \icat{}, we write $\Dop_{\mathcal{C}} \to \Dop$
  for the cocartesian fibration corresponding to the right Kan
  extension $i_{\Dop,*}\mathcal{C} \colon \Dop \to
  \CatI$. \cref{i0RKE} shows that $\Dop_{\mathcal{C}}$ is a double
  \icat{}.
\end{defn}

If $\mathcal{M} \to \Dop$ is any double \icat{} then there is a
canonical morphism $\mathcal{M} \to \Dop_{\mathcal{M}_{0}}$ over
$\Dop$, corresponding to the unit morphism
$M \to i_{\Dop,*}i_{\Dop}^{*}M$ where $M$ is the functor
$\Dop \to \CatI$ associated to $\mathcal{M}$. This preserves all
cocartesian morphisms, and so is in particular a morphism of
\gnsiopds{}.

\begin{lemma}\label{AlgDopC}
  For any \gnsiopd{} $\mathcal{O}$, we have a natural equivalence
  \[\Alg_{\mathcal{O}}(\Dop_{\mathcal{C}}) \simeq
    \Fun(\mathcal{O}_{0}, \mathcal{C}),\]
  given by restriction to the fibre over $[0]$.
\end{lemma}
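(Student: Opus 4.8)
The plan is to show that an $\mathcal{O}$-algebra in $\Dop_{\mathcal{C}}$, which is a morphism of \gnsiopds{} $\Phi \colon \mathcal{O} \to \Dop_{\mathcal{C}}$ over $\Dop$ preserving inert-cocartesian morphisms, is determined by its restriction to $\mathcal{O}_0$, and conversely that every functor $\mathcal{O}_0 \to \mathcal{C}$ extends uniquely to such an algebra. The key structural fact is that $\Dop_{\mathcal{C}}$ was defined to correspond to the right Kan extension $i_{\Dop,*}\mathcal{C}$, i.e.\ it is the terminal double \icat{} with fibre $\mathcal{C}$ over $[0]$; by the remark after \cref{i0RKE}, its value on $[n]$ is $\mathcal{C}^{\times(n+1)}$, a product of copies of the fibre indexed by the $n+1$ vertices. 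So morally $\Dop_{\mathcal{C}}$ packages ``labelled tuples'' and an algebra in it is forced to be a right Kan extension from the fibre over $[0]$.

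First I would reduce to the universal property of $\Dop_{\mathcal{C}}$ as a right Kan extension. Concretely, I would unwind $\Alg_{\mathcal{O}}(\Dop_{\mathcal{C}}) = \Fun_{/\Dop}^{\txt{inert-cocart}}(\mathcal{O}, \Dop_{\mathcal{C}})$, the full subcategory of functors over $\Dop$ that carry cocartesian lifts of inert maps to cocartesian morphisms. Since $\Dop_{\mathcal{C}}$ is a cocartesian fibration classified by $i_{\Dop,*}\mathcal{C}$, I would use straightening/unstraightening to translate such a morphism into a natural transformation, or more efficiently invoke the defining adjunction: $i_{\Dop,*}$ is right adjoint to restriction $i_{\Dop}^{*}$, so maps into $\Dop_{\mathcal{C}}$ are governed by maps into $\mathcal{C}$ out of the fibre over $[0]$. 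The cleanest route is to observe that $\Dop_{\mathcal{C}}$ represents ``Segal $\mathcal{O}$-objects valued in the terminal shape,'' and apply \cref{i0RKE} relative to $\mathcal{O}$.

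The core of the argument is the following. A morphism of \gnsiopds{} $\Phi \colon \mathcal{O} \to \Dop_{\mathcal{C}}$ assigns to each $X \in \mathcal{O}_n$ an object of $(\Dop_{\mathcal{C}})_n \simeq \mathcal{C}^{\times(n+1)}$, i.e.\ a tuple $(\Phi(X)_0, \dots, \Phi(X)_n)$, and the inert-preservation condition (ii) forces $\Phi(X)_i$ to be the value of $\Phi$ on the object $X_{i,i} \in \mathcal{O}_0$ obtained by the cocartesian lift of $\rho_{i,i}$. Thus $\Phi$ on all of $\mathcal{O}$ is completely determined by its restriction $i_{\mathcal{O}}^{*}\Phi \colon \mathcal{O}_0 \to \mathcal{C}$ via $\Phi(X) \simeq \prod_{i=0}^{n} (i_{\mathcal{O}}^{*}\Phi)(X_{i,i})$, which is exactly the formula for $i_{\mathcal{O},*}(i_{\mathcal{O}}^{*}\Phi)$ from the proof of \cref{i0RKE}. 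Conversely, any $F \colon \mathcal{O}_0 \to \mathcal{C}$ gives, via $i_{\mathcal{O},*}F \in \Seg_{\mathcal{O}}(\mathcal{C})$, a functor that assembles into an inert-preserving morphism $\mathcal{O} \to \Dop_{\mathcal{C}}$; the Segal condition on $i_{\mathcal{O},*}F$ matches the cocartesian-preservation requirement, and naturality is automatic. Assembling these two directions into mutually inverse functors, with naturality in $\mathcal{O}$, gives the claimed equivalence.

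The main obstacle I anticipate is making the ``determined by the restriction'' step precise at the level of \icats{} rather than objects: I must upgrade the objectwise bijection into an equivalence of mapping spaces, and check that the putative inverse really lands in $\Alg_{\mathcal{O}}(\Dop_{\mathcal{C}})$ (that is, that $i_{\mathcal{O},*}F$, viewed as a map to $\Dop_{\mathcal{C}}$, preserves \emph{all} inert-cocartesian morphisms, including those over maps that are not of the form $\rho_{i,i}$). The efficient way around this is to identify $\Alg_{\mathcal{O}}(\Dop_{\mathcal{C}})$ directly with $\Seg_{\mathcal{O}}(\mathcal{C})_{\txt{term}}$ — the Segal $\mathcal{O}$-objects landing in the terminal Segal objects $i_{\mathcal{O},*}F$ — and then use that $i_{\mathcal{O}}^{*} \circ i_{\mathcal{O},*} \simeq \id$ (fully faithfulness of $i_{\mathcal{O}}$, as already exploited in \cref{Seg0cart}) to conclude that restriction is an equivalence onto $\Fun(\mathcal{O}_0, \mathcal{C})$. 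Naturality in $\mathcal{O}$ then follows since every construction is built from restriction and right Kan extension, both of which are functorial in $\mathcal{O}$.
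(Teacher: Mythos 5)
Your guiding idea --- that the equivalence should come from the adjunction $i_{\Dop}^{*} \dashv i_{\Dop,*}$ defining $\Dop_{\mathcal{C}}$ --- is the same one the paper uses, and your objectwise analysis is correct: the inert conditions over the maps $\rho_{i,i}$ force $\Phi(X) \in (\Dop_{\mathcal{C}})_{n} \simeq \mathcal{C}^{\times(n+1)}$ to be the tuple $(\Phi(X_{0,0}),\ldots,\Phi(X_{n,n}))$. But the step you yourself flag as the main obstacle is where the argument actually breaks, and your proposed workaround does not close it. The adjunction $i_{\Dop}^{*} \dashv i_{\Dop,*}$ lives in $\Fun(\Dop,\CatI)$; under straightening it computes maps of \emph{cocartesian fibrations} over $\Dop$ that preserve \emph{all} cocartesian morphisms. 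An $\mathcal{O}$-algebra is a map out of a \gnsiopd{} $\mathcal{O}$ --- which is generally not a cocartesian fibration --- preserving only the inert cocartesian morphisms, so the adjunction does not apply to it directly. The paper bridges exactly this gap by first replacing $\mathcal{O}$ with $\mathcal{O}\times\Delta^{n}$ to reduce to mapping spaces, and then passing to the monoidal envelope: $\Alg_{\mathcal{O}}(\mathcal{M})$ is identified with cocartesian-morphism-preserving functors $\txt{Env}(\mathcal{O}) \to \mathcal{M}$, and $\txt{Env}(\mathcal{O})_{0} \simeq \mathcal{O}_{0}$, after which straightening plus the adjunction gives $\Map(\mathcal{M},\Dop_{\mathcal{C}}) \simeq \Map(\mathcal{M}_{0},\mathcal{C})$. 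Some such device is needed; without it your appeal to the adjunction is only heuristic.

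Your fallback --- ``identify $\Alg_{\mathcal{O}}(\Dop_{\mathcal{C}})$ with the full subcategory of $\Seg_{\mathcal{O}}(\mathcal{C})$ on the objects $i_{\mathcal{O},*}F$'' --- is not an argument but essentially a restatement of the lemma (composed with \cref{AlgSpanSeg}). Note that the canonical comparison runs $\SPANp(\mathcal{C}) \to \Dop_{\mathcal{C}}$ (a span is sent to its two feet), so $\Dop_{\mathcal{C}}$ is not a sub-double-\icat{} of $\SPANp(\mathcal{C})$, and exhibiting $\Alg_{\mathcal{O}}(\Dop_{\mathcal{C}})$ as a full subcategory of $\Alg_{\mathcal{O}}(\SPANp(\mathcal{C})) \simeq \Seg_{\mathcal{O}}(\mathcal{C})$ would itself require a fully faithful comparison that you have not constructed and that amounts to the same computation. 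A smaller point: the tuple $(\Phi(X_{0,0}),\ldots,\Phi(X_{n,n})) \in \mathcal{C}^{\times(n+1)}$ and the product $\prod_{i}\Phi(X_{i,i}) \in \mathcal{C}$ from \cref{i0RKE} live in different categories; the analogy is genuine (applying the limit functor takes one to the other) but should not be written as an identification.
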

\begin{proof}
  Since we can replace $\mathcal{O}$ by $\mathcal{O} \times
  \Delta^{n}$, it suffices to show that we have a natural equivalence
  of mapping spaces
  \[ \Map_{\OpdI^{\txt{gns}}}(\mathcal{O}, \Dop_{\mathcal{C}}) \simeq
    \Map_{\CatI}(\mathcal{O}_{0}, \mathcal{C}).\] The \gnsiopd{} $\mathcal{O}$
  has an enveloping double \icat{} (see \cite{nmorita}*{\S A.8})
  $\txt{Env}(\mathcal{O})$ with a morphism of \gnsiopds{}
  $\mathcal{O} \to \txt{Env}(\mathcal{O})$ such that composition with
  this gives an equivalence between $\mathcal{O}$-algebras in a double
  \icat{} $\mathcal{M}$ and functors $\txt{Env}(\mathcal{O}) \to
  \mathcal{M}$ that preserve all cocartesian morphisms. Moreover,
  $\txt{Env}(\mathcal{O})_{0} \simeq \mathcal{O}_{0}$. It therefore
  suffices to prove that the natural map
  \[ \Map_{\Cat_{\infty/\Dop}^{\txt{cocart}}}(\mathcal{M},
    \Dop_{\mathcal{C}}) \to \Map_{\CatI}(\mathcal{M}_{0}, \mathcal{C})\] is an
  equivalence, where $\mathcal{M}$ is now a double \icat{}. If $M$ is
  the corresponding functor $\Dop \to \CatI$, then we can rewrite this as
  \[
    \begin{split}
      \Map_{\Cat_{\infty/\Dop}^{\txt{cocart}}}(\mathcal{M},
    \Dop_{\mathcal{C}}) & \simeq \Map_{\Fun(\Dop, \CatI)}(M,
    i_{*}\mathcal{C}) \\ & \simeq \Map_{\CatI}(i^{*}M, \mathcal{C}) \\
    & \simeq
    \Map_{\CatI}(\mathcal{M}_{0}, \mathcal{C}),
    \end{split}
\]
  as required.
\end{proof}

\begin{remark}\label{DblMon}
  Let $\mathcal{M}$ be a double \icat{} and $X$ an object of
  $\mathcal{M}_{0}$. Then $X$ induces a functor $\Dop \to
  \Dop_{\mathcal{M}_{0}}$ over $\Dop$ (corresponding to the morphism
  of right Kan extensions $i_{\Dop,*}\{X\} \to
  i_{\Dop,*}\mathcal{M}_{0}$), which preserves cocartesian
  morphisms. We can then define a monoidal \icat{}
  $\mathcal{M}_{X}^{\otimes}$ as the pullback
  \[
    \begin{tikzcd}
      \mathcal{M}_{X}^{\otimes} \arrow{r} \arrow{d} & \mathcal{M}
      \arrow{d} \\
      \Dop \arrow{r} & \Dop_{\mathcal{M}_{0}}
    \end{tikzcd}
  \]
  of \icats{}, which is also a pullback of cocartesian fibrations over
  $\Dop$. If we think of objects of $\mathcal{M}_{1}$ as ``horizontal
  morphisms'' in the double \icat{}, then this gives a monoidal
  structure on the \icat{}
  $\mathcal{M}_{1}(X,X) :=
  \mathcal{M}_{1}\times_{\mathcal{M}_{0}\times \mathcal{M}_{0}}
  \{(X,X)\}$ of horizontal endomorphisms of $X$, given by composition
  of horizontal morphisms.
\end{remark}

\section{Category Objects as Algebras in Spans}\label{sec:span}
In this section we will prove that category objects in an \icat{}
$\mathcal{C}$ can be identified with associative algebras in the
double \icat{} of spans in $\mathcal{C}$. We first recall the
construction of this double \icat{}, following \cite{spans} (see also
\cite{BarwickMackey, DyckerhoffKapranovTwoSeg, GaitsgoryRozenblyum1}
for alternative approaches).

\begin{defn}
  Let $\bbS^{n}$ denote the partially ordered set of pairs $(i,j)$
  with $0 \leq i \leq j \leq n$, where $(i,j) \leq (i',j')$ if
  $i \leq i' \leq j' \leq j$. These give a functor
  $\bbS^{\bullet} \colon \simp \to \Cat$, where for
  $\phi \colon [n] \to [m]$ the functor $\bbS^{n} \to \bbS^{m}$ takes
  $(i,j)$ to $(\phi(i),\phi(j))$. If $\mathcal{C}$ is an \icat{}, we
  write $\oSPANp(\mathcal{C}) \to \Dop$ for the cocartesian
  fibration corresponding to the functor
  \[\Fun(\bbS^{\bullet},\mathcal{C}) \colon \Dop \to \CatI.\]
  If $\mathcal{C}$ is an \icat{} with pullbacks, we write
  $\SPANp(\mathcal{C})$ for the full subcategory of
  $\oSPANp(\mathcal{C})$ spanned by the objects $F \colon \bbS^{n}
  \to \mathcal{C}$ such that the canonical morphism
  \[ F(i,j) \to F(i,i+1) \times_{F(i+1,i+1)} \cdots
    \times_{F(j-1,j-1)} F(j-1,j) \]
  is an equivalence for all $i,j$.
\end{defn}

\begin{propn}[\cite{spans}*{Proposition 5.14}]
  For any \icat{} $\mathcal{C}$ with pullbacks, the restricted functor
  $\SPANp(\mathcal{C}) \to \Dop$ is a double \icat{}.
\end{propn}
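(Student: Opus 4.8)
The plan is to unwind the two conditions defining a double \icat{}: that $\SPANp(\mathcal{C}) \to \Dop$ is a cocartesian fibration, and that its straightening $\Dop \to \CatI$ is a category object. By construction $\oSPANp(\mathcal{C}) \to \Dop$ is the cocartesian fibration classified by $\Fun(\bbS^{\bullet}, \mathcal{C})$, so its fibre over $[n]$ is $\Fun(\bbS^{n}, \mathcal{C})$ and the cocartesian pushforward along a morphism $[n] \to [m]$ of $\Dop$ --- that is, a monotone map $\psi \colon [m] \to [n]$ --- is restriction along $\bbS^{\psi} \colon \bbS^{m} \to \bbS^{n}$, $(i,j) \mapsto (\psi(i), \psi(j))$. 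Since $\SPANp(\mathcal{C})_{n} \subseteq \Fun(\bbS^{n}, \mathcal{C})$ is by definition the full subcategory of Segal objects, I would establish the two conditions as steps (i) and (ii) below.

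For step (i) I would invoke the standard criterion that a full subcategory of a cocartesian fibration which is closed under cocartesian pushforward is again a cocartesian fibration, with the same cocartesian morphisms. The content is then to check that if $F \colon \bbS^{n} \to \mathcal{C}$ satisfies the Segal condition and $\psi \colon [m] \to [n]$ is monotone, then $F \circ \bbS^{\psi}$ does too. This is a direct computation: for $(i,j) \in \bbS^{m}$ one expands both $F(\psi(i), \psi(j))$ and the candidate limit $F(\psi(i),\psi(i+1)) \times_{F(\psi(i+1),\psi(i+1))} \cdots$, using the Segal property of $F$, into the single iterated fibre product over all the unit intervals of $[\psi(i), \psi(j)]$; these agree by associativity of pullback once the unit intervals are regrouped into the blocks $[\psi(l), \psi(l+1)]$. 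The only subtlety is that when $\psi$ is non-injective a block degenerates to a point, contributing an identity span, but the regrouping is unaffected. I expect this associativity-of-iterated-pullbacks computation to be the main obstacle.

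For step (ii), the category-object condition asks that the map
\[ \SPANp(\mathcal{C})_{n} \longrightarrow \SPANp(\mathcal{C})_{1} \times_{\SPANp(\mathcal{C})_{0}} \cdots \times_{\SPANp(\mathcal{C})_{0}} \SPANp(\mathcal{C})_{1} \]
induced by the pushforwards along $\rho_{i,i+1}$ and $\rho_{i,i}$ be an equivalence. Here $\SPANp(\mathcal{C})_{0} = \Fun(\bbS^{0}, \mathcal{C}) \simeq \mathcal{C}$ and $\SPANp(\mathcal{C})_{1} = \Fun(\bbS^{1}, \mathcal{C})$, the Segal condition being vacuous for $n \leq 1$. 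Unwinding the pushforwards as in step (i), the map along $\rho_{i,i+1}$ is restriction to the $i$-th edge span and along $\rho_{i,i}$ restriction to the vertex $(i,i)$, so the displayed map is restriction along the inclusion of the spine $\bbS^{n}_{\leq 1} \subseteq \bbS^{n}$ (the full subposet on pairs $(i,j)$ with $j - i \leq 1$). Since a diagram $\bbS^{n}_{\leq 1} \to \mathcal{C}$ is exactly a tuple of $n$ spans with matching consecutive endpoints, the target is identified with $\Fun(\bbS^{n}_{\leq 1}, \mathcal{C})$, and it remains to show that restriction along $\bbS^{n}_{\leq 1} \hookrightarrow \bbS^{n}$ is an equivalence on Segal objects. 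I would argue this exactly as in \cref{i0RKE}: the inclusion is fully faithful, so right Kan extension along it (which exists since $\mathcal{C}$ has pullbacks) is a fully faithful right adjoint to restriction, and the Kan extension of $G$ at $(i,j)$ is the limit over the comma category, which is precisely the spine of the interval $[i,j]$. That limit is the iterated fibre product $G(i,i+1) \times_{G(i+1,i+1)} \cdots \times_{G(j-1,j-1)} G(j-1,j)$, so the essential image of the right Kan extension is exactly the Segal objects; hence restriction and right Kan extension are mutually inverse there, giving the desired equivalence and completing the verification that the straightening is a category object.
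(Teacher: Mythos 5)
The paper does not actually prove this proposition: it is imported verbatim from \cite{spans}*{Proposition 5.14}, so there is no in-text argument to compare yours against. Your proof is correct, and it is essentially the standard argument (and the same in spirit as the one in the cited reference). Step (i) is fine: the criterion that a full subcategory of a cocartesian fibration closed under cocartesian pushforward is again a cocartesian fibration with the same cocartesian edges is standard, and the closure check reduces, as you say, to regrouping the iterated fibre product over the unit intervals of $[\psi(i),\psi(j)]$ into the blocks $[\psi(l),\psi(l+1)]$, with degenerate blocks contributing redundant factors of the form $F(k,k)\times_{F(k,k)}(\blank)$. Step (ii) is also sound and runs exactly parallel to \cref{i0RKE}: the comma category computing the right Kan extension along $\bbS^{n}_{\leq 1}\hookrightarrow \bbS^{n}$ at $(i,j)$ is the spine of the interval $[i,j]$, its limit is the iterated pullback appearing in the Segal condition, and the essential image of the (fully faithful) right Kan extension is therefore precisely the Segal objects. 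Two small points deserve a sentence each if you write this up: first, the identification of $\SPANp(\mathcal{C})_{1}\times_{\SPANp(\mathcal{C})_{0}}\cdots\times_{\SPANp(\mathcal{C})_{0}}\SPANp(\mathcal{C})_{1}$ with $\Fun(\bbS^{n}_{\leq 1},\mathcal{C})$ requires knowing that the iterated pushout $\bbS^{1}\sqcup_{\bbS^{0}}\cdots\sqcup_{\bbS^{0}}\bbS^{1}$ in $\CatI$ is the full subposet $\bbS^{n}_{\leq 1}$; this holds because the glued objects $(l,l)$ are sinks in each copy of $\bbS^{1}$, so no new composites are created (the analogue of the spine inclusion being inner anodyne). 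Second, one should note that the comparison map in the Segal condition really is restriction along this inclusion, which follows since each pushforward along $\rho_{i,j}$ is restriction along the subposet inclusion $\bbS^{\rho_{i,j}}$ and these are compatible with the structure maps to $\bbS^{0}$. Neither point is a gap, just detail worth making explicit.
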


\begin{defn}
  For $X \in \mathcal{C}$, we can define a monoidal \icat{}
  $\mathcal{C}_{/X,X}^{\otimes} := \SPANp(\mathcal{C})_{X}$ as in
  \cref{DblMon}.  This gives a monoidal structure on the \icat{}
  $\mathcal{C}_{/X,X} \simeq \SPANp(\mathcal{C})_{1}(X,X)$ of
  objects of $\mathcal{C}$ equipped with two maps to $X$. The tensor
  product of $X \from Y \to X$ and $X \from Z \to X$ is defined by the
  pullback $X \from Y \times_{X} Z \to X$, and the unit is
  $X \xfrom{\id} X \xto{\id} X$.
\end{defn}

\begin{defn}
  Let $p \colon \hbbS \to \Dop$ denote the cartesian fibration for the
  functor $\bbS^{\bullet} \colon \simp \to \Cat$. We can identify
  objects of $\hbbS$ with pairs $([n], (i,j))$ where $[n] \in \simp$
  and $0 \leq i \leq j \leq n$; a morphism $([n], (i,j)) \to ([m],
  (i',j'))$ is given by a morphism $\phi \colon [m] \to [n]$ in
  $\simp$ such that $(i,j) \leq (\phi(i'), \phi(j'))$ in the partially
  ordered set $\bbS^{n}$. Note that this morphism is cartesian
  precisely when $(i,j) = (\phi(i'), \phi(j'))$. 
\end{defn}

\begin{propn}\label{SPANunivprop}
  For any \icat{} $\mathcal{I}$ over $\Dop$, there is a natural equivalence
  \[ \Fun_{/\Dop}(\mathcal{I}, \oSPANp(\mathcal{C})) \simeq
    \Fun(\mathcal{I} \times_{\Dop} \hbbS, \mathcal{C}).\] A functor
  $F \colon \mathcal{I} \to \oSPANp(\mathcal{C})$ takes a morphism
  $\phi \colon x \to y$ in $\mathcal{I}$ to a cocartesian morphism in
  $\oSPANp(\mathcal{C})$ \IFF{} the corresponding functor
  $F' \colon \mathcal{I} \times_{\Dop} \hbbS \to \mathcal{C}$ (which
  satisfies $F'(x, (i,j)) \simeq F(x)(i,j)$) takes all morphisms
  $(\phi, \gamma)$ where $\gamma$ in $\hbbS$ is cartesian to
  equivalences in $\mathcal{C}$.
\end{propn}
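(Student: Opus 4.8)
The plan is to prove \cref{SPANunivprop} by unwinding the defining construction of $\oSPANp(\mathcal{C})$ as the cocartesian fibration associated to the functor $\Fun(\bbS^{\bullet}, \mathcal{C}) \colon \Dop \to \CatI$, and then invoking the standard adjunction between cocartesian fibrations over $\Dop$ (equivalently, functors $\Dop \to \CatI$) and cartesian fibrations together with the straightening/unstraightening dictionary. First I would recall that, by the very definition of $\hbbS$ as the cartesian fibration classified by $\bbS^{\bullet}$, giving a functor $\mathcal{I} \times_{\Dop} \hbbS \to \mathcal{C}$ is the same data as a map over $\Dop$ from $\mathcal{I}$ into the cocartesian fibration whose fibre over $[n]$ is $\Fun(\bbS^{n}, \mathcal{C})$. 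This is exactly $\oSPANp(\mathcal{C})$, so the core equivalence should be a formal consequence of the universal property of pullback and of (un)straightening.

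More concretely, I would argue as follows. The functor $\Fun(\bbS^{\bullet}, \mathcal{C})$ sends $[n]$ to $\Fun(\bbS^{n}, \mathcal{C})$, and one has a general fact that maps over $\Dop$ from $\mathcal{I}$ into the cocartesian fibration classified by a functor $\mathcal{C}$ applied to the mapping-space-functor can be computed fibrewise. The key identification I would isolate is the natural equivalence
\[ \Fun_{/\Dop}(\mathcal{I}, \oSPANp(\mathcal{C})) \simeq \Fun_{/\Dop}\bigl(\mathcal{I}, \textstyle\int \Fun(\bbS^{\bullet}, \mathcal{C})\bigr), \]
where the Grothendieck construction on the right is by definition $\oSPANp(\mathcal{C})$. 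Then, using that $\hbbS$ is the cartesian unstraightening of $\bbS^{\bullet}$, a cotensor/currying argument converts sections over $\mathcal{I}$ of the $\Fun(\bbS^{\bullet}, \mathcal{C})$-fibration into functors out of the fibre product $\mathcal{I} \times_{\Dop} \hbbS$. The fibrewise formula $F'(x,(i,j)) \simeq F(x)(i,j)$ then just records what this currying does on objects, since the fibre of $\mathcal{I} \times_{\Dop} \hbbS$ over $x \in \mathcal{I}_{n}$ is $\bbS^{n}$.

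For the second assertion, about when $F$ sends a morphism $\phi \colon x \to y$ to a cocartesian morphism, I would argue by characterising cocartesian morphisms in the Grothendieck construction: a morphism in $\oSPANp(\mathcal{C})$ over $\phi \colon [m] \to [n]$ in $\Dop$ is cocartesian precisely when the induced map $\phi_{!}F(x) \to F(y)$ in $\Fun(\bbS^{m}, \mathcal{C})$ is an equivalence, where $\phi_{!}$ is restriction along the functor $\bbS^{m} \to \bbS^{n}$ sending $(i,j) \mapsto (\phi(i), \phi(j))$. Translating this through the currying, the comparison map is an equivalence of functors $\bbS^{m} \to \mathcal{C}$ exactly when it is an equivalence objectwise, and evaluation at $(i',j') \in \bbS^{m}$ corresponds precisely to the value of $F'$ on the morphism $(\phi, \gamma)$ of $\mathcal{I} \times_{\Dop} \hbbS$ in which $\gamma$ is the cartesian lift with $(\phi(i'),\phi(j')) = (i,j)$. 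Thus cocartesianness of $F(\phi)$ is equivalent to $F'$ inverting all such pairs $(\phi, \gamma)$ with $\gamma$ cartesian, which is the claim.

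The main obstacle I expect is purely bookkeeping at the level of straightening: making precise and natural the currying equivalence $\Fun_{/\Dop}(\mathcal{I}, \int \Fun(\bbS^{\bullet}, \mathcal{C})) \simeq \Fun(\mathcal{I} \times_{\Dop} \hbbS, \mathcal{C})$ in a way that is manifestly functorial in $\mathcal{I}$ and correctly matches the cartesian structure on $\hbbS$ against the covariance of $\Fun(\bbS^{\bullet}, -)$. Once that adjunction is set up cleanly, both the object-level formula and the cocartesian criterion fall out by inspecting what happens over individual morphisms of $\Dop$, reducing everything to the elementary observation that a map of diagrams $\bbS^{m} \to \mathcal{C}$ is an equivalence if and only if it is so at each object $(i',j')$.
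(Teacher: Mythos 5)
Your proposal is correct and follows essentially the same route as the paper, which simply cites \cite{freepres}*{Proposition 7.3} to identify $\oSPANp(\mathcal{C})$ with the cocartesian fibration given by the universal property of \cite{HTT}*{Corollary 3.2.2.13(1)} and then reads off the cocartesian morphisms from part (2) of that corollary. The ``bookkeeping'' step you flag (the natural currying equivalence matching the cartesian structure of $\hbbS$ against the covariance of $\Fun(\bbS^{\bullet},-)$) is exactly the content of those cited results, and your objectwise analysis of cocartesianness matches the paper's.
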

\begin{proof}
  The equivalence follows from \cite{freepres}*{Proposition 7.3},
  which identifies $\oSPANp(\mathcal{C})$ with the cocartesian
  fibration defined in \cite{HTT}*{Corollary 3.2.2.13(1)} by this
  universal property. The second statement then follows from the
  description of the cocartesian morphisms in \cite{HTT}*{Corollary
    3.2.2.13(2)}. 
\end{proof}

\begin{defn}
  We define a functor $\Pi \colon \hbbS \to \Dop$ by setting $\Pi([n],
  (i,j)) := [j-i]$ and sending a map $([n], (i,j)) \to ([m], (i',j'))$
  lying over $\phi \colon [m] \to [n]$ to the map $[j'-i'] \to [j-i]$
  given by $t \mapsto \phi(t+i')-i$. (In other words, we restrict
  $\phi$ to a map $\{i',i'+1,\ldots,j'\} \to \{i,i+1,\ldots,j\}$.)
\end{defn}

\begin{defn}
  We can define another functor $\Psi \colon \Dop \to \hbbS$ by
  sending $[n]$ to $([n], (0,n))$ and a morphism $\phi \colon [m] \to
  [n]$ in $\simp$ to the morphism $([n], (0,n)) \to ([m], (0,m))$
  lying over $\phi$. Observe that we have $\Pi\Psi \simeq
  \id_{\Dop}$. We can also define a natural transformation $\eta
  \colon \id_{\hbbS} \to \Psi\Pi$ by taking $\eta_{([n],(i,j))}$ to
  be the map $([n], (i,j)) \to ([j-i], (0,j-i))$ lying over $\rho_{i,j}$.
  We also have $p \Psi = \id_{\Dop}$, and a natural transformation
  $\epsilon \colon \Psi p \to \id_{\hbbS}$ given by the natural maps
  $([n],(0,n)) \to ([n],(i,j))$.
\end{defn}

\begin{propn}\label{PiLoc}
  The functor $\Pi \colon \hbbS \to \Dop$ exhibits $\Dop$ as the
  localization of $\hbbS$ at the set $I$ of cartesian morphisms that lie
  over inert maps in $\Dop$.
\end{propn}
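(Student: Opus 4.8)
The plan is to realize $\Pi$ as a \emph{reflective localization} functor and then identify its inverted morphisms with $I$. The two natural transformations recorded just before the statement already package the needed adjunction: I claim that $\eta \colon \id_{\hbbS} \to \Psi\Pi$ is the unit and the equivalence $\Pi\Psi \simeq \id_{\Dop}$ the counit of an adjunction $\Pi \dashv \Psi$, with $\Psi$ fully faithful. Since $\hbbS$ and $\Dop$ are ordinary categories, it suffices to exhibit this adjunction at the level of $1$-categories and then pass to nerves; a reflective localization of $1$-categories is automatically one of \icats{}, and the essential-image bookkeeping below is carried out inside the functor \icats{} $\Fun(\Dop,\mathcal{D})$ and $\Fun(\hbbS,\mathcal{D})$ using only the resulting adjunction of \icats{}.

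To establish $\Pi \dashv \Psi$ I would verify directly that for every $x = ([n],(i,j)) \in \hbbS$ and every $[k] \in \Dop$ the map
\[ \Map_{\Dop}(\Pi x, [k]) \to \Map_{\hbbS}(x, \Psi[k]), \qquad g \mapsto \Psi(g)\circ \eta_{x}, \]
is a bijection, natural in both variables. The left-hand side is the set $\Map_{\simp}([k], [j-i])$ of monotone maps $[k] \to [j-i]$. A morphism $x \to \Psi[k] = ([k],(0,k))$ is a map $\phi \colon [k] \to [n]$ with $i \le \phi(0) \le \phi(k) \le j$, i.e. a monotone map of $[k]$ into the subinterval $\{i,\ldots,j\}$; writing such a $\phi$ as $t \mapsto i + g(t)$ identifies the right-hand side with the same set $\Map_{\simp}([k],[j-i])$, and unwinding the definitions of $\eta$ and $\Psi$ shows the displayed map is exactly this bijection. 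Because the counit $\Pi\Psi \simeq \id_{\Dop}$ is an equivalence, $\Psi$ is fully faithful.

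The rest is formal. Precomposition yields an adjunction $\Psi^{*} \dashv \Pi^{*}$ between $\Fun(\Dop,\mathcal{D})$ and $\Fun(\hbbS,\mathcal{D})$ for any \icat{} $\mathcal{D}$, whose counit is whiskering with $\epsilon$ and hence an equivalence, so $\Pi^{*}$ is fully faithful. The essential image of $\Pi^{*}$ consists of those $G \colon \hbbS \to \mathcal{D}$ for which the reflection unit $G\eta \colon G \to G\Psi\Pi$ is an equivalence, i.e. for which $G(\eta_{x})$ is an equivalence for all $x$. Two observations then close the argument. First, each component $\eta_{x}$ lies in $I$: it sits over the inert map $\rho_{i,j}$ and is cartesian, since $(i,j) = (\rho_{i,j}(0), \rho_{i,j}(j-i))$; hence every $I$-inverting $G$ lies in the essential image. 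Second, $\Pi$ itself inverts $I$: a cartesian morphism over an inert $\phi$ has $(i,j) = (\phi(i'),\phi(j'))$ and $\phi(t) = \phi(0)+t$, so $\Pi$ sends it to $t \mapsto \phi(t+i')-i = t$, an identity; thus every $G \simeq F\Pi$ in the essential image inverts $I$. Combining the two, the essential image of the fully faithful functor $\Pi^{*}$ is precisely the full subcategory of $I$-inverting functors, which is exactly the assertion that $\Pi$ exhibits $\Dop$ as $\hbbS[I^{-1}]$.

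The main obstacle is the first step: turning the given data $\eta$, $\epsilon$, $\Pi\Psi \simeq \id$ into a genuine adjunction rather than a plausible-looking triple. The cleanest route is the explicit hom-set bijection above, which avoids checking triangle identities directly; once $\Pi \dashv \Psi$ and the full faithfulness of $\Psi$ are secured, everything else is the standard characterization of reflective localizations and of the class of morphisms they invert.
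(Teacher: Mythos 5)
Your argument is correct, and it turns on the same two observations that drive the paper's proof: the components of $\eta$ are cartesian morphisms over the inert maps $\rho_{i,j}$ and hence lie in $I$, and $\Pi$ carries every morphism of $I$ to an identity. The difference is organizational. The paper does not set up an adjunction; it introduces the class $W$ of \emph{all} morphisms inverted by $\Pi$, uses the triangle built from the components of $\eta$ together with two-out-of-three to show that inverting $I$ is the same as inverting $W$, and then checks directly that composition with $\Psi$ is inverse to $\Pi^{*} \colon \Fun(\Dop,\mathcal{C}) \to \Fun_{W}(\hbbS,\mathcal{C})$ by whiskering with $\eta$. You instead certify the honest adjunction $\Pi \dashv \Psi$ via the hom-set bijection (which does check out: the unit it induces is exactly $\eta$ and the counit is the identity, so $\Psi$ is fully faithful), pass to the induced adjunction $\Psi^{*} \dashv \Pi^{*}$ on functor \icats{}, and identify the essential image of the fully faithful right adjoint $\Pi^{*}$ with the $I$-inverting functors. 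What your route buys is the elimination of the auxiliary class $W$ and the two-out-of-three step --- you exploit that $\eta_{x}$ lies in $I$ itself, not merely in $W$ --- together with the structural bonus of recording that $\Pi$ is a reflective localization in the adjoint-functor sense, a fact the paper leaves implicit. The underlying verification, namely that whiskering with $\eta$ supplies the inverse equivalence on functor categories, is the same in both proofs, so I would call this a genuine but modest repackaging rather than a new argument.
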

\begin{proof}
  Let $W$ be the set of morphisms in $\hbbS$ that are mapped to
  isomorphisms (\ie{} identity morphisms) by $\Pi$. A morphism
  $([n],(i,j)) \to ([m],(i',j'))$ over $\phi \colon [m] \to [n]$ is in
  $W$ \IFF{} we have $j'-i'=j-i$ and $\phi(i'+t)=i+t$ for $t =
  0,\ldots,j'-i'$. In this case we have a commutative triangle
  \[
    \begin{tikzcd}
      ([n],(i,j))\arrow{dr} \arrow{rr} & & ([m],(i',j')) \arrow{dl} \\
      & ([j-i],(0,j-i))
    \end{tikzcd}
  \]
  where the two diagonal morphisms are in $I$. Thus by the 2-of-3
  property for equivalences, any functor $\hbbS \to \mathcal{C}$ that
  takes the maps in $I$ to equivalences takes all the maps in $W$ to
  equivalences. The localizations of $\hbbS$ at $I$ and $W$ are
  therefore the same. On the other hand, the components of the natural
  transformation $\eta$ are all in $W$, so using $\eta$ we see that composition
  with $\Psi$ is an inverse to
  \[ \Pi^{*} \colon \Fun(\Dop, \mathcal{C}) \to \Fun_{W}(\hbbS,
    \mathcal{C}),\] for any \icat{} $\mathcal{C}$, where
  $\Fun_{W}(\hbbS, \mathcal{C})$ denotes the full subcategory of
  functors $\hbbS \to \mathcal{C}$ that take the morphisms in $W$ to
  equivalences. In other words, $\Pi$ exhibits $\Dop$ as the
  localization of $\hbbS$ at $W$.
\end{proof}

\begin{propn}\label{intcocloc}
  Suppose $f \colon \mathcal{I} \to \Dop$ is a functor such that $\mathcal{I}$
  has $f$-cocartesian morphisms over inert maps in $\Dop$. Then there is a
  functor $\overline{\Pi} \colon \mathcal{I} \times_{\Dop} \hbbS
  \to \mathcal{I}$ (where the fibre product is over $p$) lying over
  $\Pi$, which exhibits $\mathcal{I}$ as the localization of
  $\mathcal{I} \times_{\Dop} \hbbS$ at the set $I_{\mathcal{I}}$ of morphisms $(x,(i,j))
  \to (x',(i',j'))$ such that $f(x) \to f(x')$ is inert, $I \to I'$ is
  cocartesian, and $(f(I),(i,j)) \to (f(I'),(i',j'))$ is cartesian.  
\end{propn}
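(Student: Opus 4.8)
The plan is to relativize the proof of \cref{PiLoc} over $\mathcal{I}$, replacing the pair $(\Pi,\Psi)$ by relative versions. First I would record the easy half: since $\Psi$ is a section of $p$, base change along $f$ (identifying $\mathcal{I} \simeq \mathcal{I}\times_{\Dop}\Dop$) produces a functor $\overline{\Psi}\colon \mathcal{I} \to \mathcal{I}\times_{\Dop}\hbbS$, $x \mapsto (x,\Psi f(x))$. As $\Psi$ is fully faithful (being a section of $p$ with $\Pi\Psi \simeq \id$) and fully faithful functors are stable under base change, $\overline{\Psi}$ is again fully faithful. The content is then to produce a left adjoint $\overline{\Pi}$ to $\overline{\Psi}$ lying over $\Pi$; a left adjoint to a fully faithful functor is automatically a reflective localization, exhibiting $\mathcal{I}$ as the localization of $\mathcal{I}\times_{\Dop}\hbbS$ at the class of morphisms it inverts, so it will only remain to identify that class with the saturation of $I_{\mathcal{I}}$.

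To construct $\overline{\Pi}$ I would apply the pointwise criterion for the existence of an adjoint \cite{HTT}: for each object $(x,([n],(i,j)))$ I must exhibit an initial object of the comma \icat{} of morphisms $(x,(i,j)) \to \overline{\Psi}(y)$. The hypothesis that $\mathcal{I}$ has $f$-cocartesian lifts of inert maps lets me choose a lift $x \to \rho_{i,j}^{!}x$ of $\rho_{i,j}$; pairing it with the cartesian map $\eta_{([n],(i,j))}\colon ([n],(i,j)) \to ([j-i],(0,j-i))$ in $\hbbS$ (the component of the transformation $\eta$, which also lies over $\rho_{i,j}$) gives a morphism $\overline{\eta}_{(x,(i,j))}\colon (x,(i,j)) \to \overline{\Psi}(\rho_{i,j}^{!}x)$. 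The universal property of the cocartesian lift, combined with the absolute adjunction $\Pi \dashv \Psi$ underlying \cref{PiLoc}, shows this is the required initial object, so $\overline{\Pi}(x,(i,j)) := \rho_{i,j}^{!}x$ assembles into a left adjoint with unit $\overline{\eta}$ and $\overline{\Pi}\,\overline{\Psi} \simeq \id_{\mathcal{I}}$; since $f(\rho_{i,j}^{!}x) = [j-i] = \Pi([n],(i,j))$, the functor $\overline{\Pi}$ lies over $\Pi$.

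It then remains to check that the $\overline{\Pi}$-equivalences are generated by $I_{\mathcal{I}}$. By construction each unit $\overline{\eta}_{(x,(i,j))}$ already belongs to $I_{\mathcal{I}}$: its component in $\mathcal{I}$ is $f$-cocartesian over the inert map $\rho_{i,j}$ and its component $\eta_{([n],(i,j))}$ in $\hbbS$ is cartesian. Conversely I would verify that $\overline{\Pi}$ inverts every morphism of $I_{\mathcal{I}}$: for such a morphism, lying over an inert $\phi$ with cartesian $\hbbS$-component and $f$-cocartesian $\mathcal{I}$-component $x \to x'$, one computes $\phi\,\rho_{i',j'} = \rho_{i,j}$, so composition of cocartesian lifts identifies $\overline{\Pi}$ of the morphism with the canonical equivalence $\rho_{i,j}^{!}x \isoto \rho_{i',j'}^{!}x'$. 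Since the morphisms inverted by a reflective localization form a strongly saturated class generated by its units, these two facts show that class coincides with the strong saturation of $I_{\mathcal{I}}$, so $\overline{\Pi}$ exhibits $\mathcal{I}$ as the localization of $\mathcal{I}\times_{\Dop}\hbbS$ at $I_{\mathcal{I}}$, exactly parallel to \cref{PiLoc}.

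The main obstacle is the coherent construction of $\overline{\Pi}$: cocartesian transport along the maps $\rho_{i,j}$ is only defined up to contractible choice object-by-object, so the genuine work lies in packaging these lifts into an actual functor. Framing this as the existence of a left adjoint to the fully faithful $\overline{\Psi}$, via the pointwise criterion, is precisely what sidesteps the explicit coherence bookkeeping; the remaining two verifications (that the units lie in $I_{\mathcal{I}}$, and that $I_{\mathcal{I}}$ is inverted) are then the elementary simplicial computations indicated above.
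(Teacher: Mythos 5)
Your argument is correct and, at bottom, builds the same objects as the paper: both proofs take $\overline{\Psi}$ to be the base change of $\Psi$, both take as unit at $(x,([n],(i,j)))$ the map to $(\rho_{i,j}^{!}x,([j-i],(0,j-i)))$ whose $\mathcal{I}$-component is a cocartesian lift of $\rho_{i,j}$ and whose $\hbbS$-component is $\eta$, both observe that these units lie in $I_{\mathcal{I}}$, and both finish by a two-out-of-three argument showing that the $\overline{\Pi}$-equivalences $W_{\mathcal{I}}$ and $I_{\mathcal{I}}$ determine the same localization. Where you differ is in the device used to make $\overline{\Pi}$ coherent: the paper lifts the whole natural transformation $\eta$ cocartesianly to $\overline{\eta}$ using the relative lifting property of \cite{HTT}*{Propositions 3.1.1.6 and 3.1.2.3}, sets $\overline{\Pi} := \overline{p}\,\overline{\eta}_{1}$, and then reruns the explicit functor-category argument of \cref{PiLoc}; you instead invoke the pointwise criterion for left adjoints together with the general fact that a left adjoint to a fully faithful functor is a localization at the class of morphisms it inverts. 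Your packaging is more conceptual (it makes the adjunction $\overline{\Pi} \dashv \overline{\Psi}$ explicit, which the paper leaves implicit), but it leaves two small debts to discharge. First, the initiality of your candidate in the comma category should be spelled out: it reduces to the identification
\[ \Map_{\mathcal{I}\times_{\Dop}\hbbS}\bigl((x,([n],(i,j))),\overline{\Psi}y\bigr) \simeq \Map_{\mathcal{I}}(x,y)\times_{\Map_{\Dop}([n],f(y))}\Map_{\Dop}([j-i],f(y)) \simeq \Map_{\mathcal{I}}(\rho_{i,j}^{!}x,y), \]
where the first equivalence uses the adjunction $\Pi\dashv\Psi$ on the $\hbbS$-factor and the second uses the cocartesian property on the $\mathcal{I}$-factor. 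Second, the claim that $\overline{\Pi}$ lies over $\Pi$ needs more than the objectwise identity $f(\rho_{i,j}^{!}x)=[j-i]$; one quick fix is to note that $f\overline{\Pi}\simeq \Pi q\overline{\Psi}\overline{\Pi}$ (where $q$ is the projection to $\hbbS$) and that $\Pi q$ inverts the unit, giving a natural equivalence $\Pi q \isoto f\overline{\Pi}$ --- the paper gets this for free from the formula $\overline{\Pi}=\overline{p}\,\overline{\eta}_{1}$.
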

\begin{proof}
  The functor $\Psi \colon \Dop \to \hbbS$ satisfies
  $p \circ \Psi \cong \id_{\Dop}$, and so induces a functor
  $\overline{\Psi} \colon \mathcal{I} \to \mathcal{I} \times_{\Dop}
  \hbbS$. If $\overline{p} \colon \mathcal{I} \times_{\Dop} \hbbS \to \mathcal{I}$ is
  the projection (which lies over $p \colon \hbbS \to \Dop$), then we
  have an equivalence $\overline{p} \overline{\Psi} \simeq \id$, and we also get a
  natural transformation $\overline{\epsilon} \colon \overline{\Psi} \overline{p} \to
  \id$ over $\epsilon$.
  
  Since the components of
  $\eta \colon \hbbS \times \Delta^{1} \to \hbbS$ lie over inert
  morphisms in $\Dop$, there is a unique cocartesian lift of $\eta$ to
  a natural transformation
  $\overline{\eta} \colon \mathcal{I} \times_{\Dop} \hbbS \to \mathcal{I}
  \times_{\Dop} \hbbS$, where $\overline{\eta}_{0}$ is the identity and
  $\overline{\eta}(x,(i,j))$ is $(x,(i,j)) \to (x_{i,j}, (0,j-i))$
  where $x \to x_{i,j}$ is a cocartesian morphism over
  $\rho_{i,j}$. (This follows from the lifting property obtained by
  combining \cite{HTT}*{Propositions 3.1.1.6 and 3.1.2.3}.) We define
  $\overline{\Pi} \colon \mathcal{I} \times_{\Dop} \hbbS \to \mathcal{I}$
  to be the composite $\overline{p}\overline{\eta}_{1}$. Then
  $\overline{\Pi}$ lies over $p\Psi\Pi$, which is $\Pi$ as
  $p \Psi = \id_{\Dop}$. We can identify $\overline{\eta}_{1}$ with
  $\overline{\Psi}\overline{\Pi}$ since
  \[\overline{\Psi} \overline{\Pi} \simeq \overline{\Psi}\overline{p} \overline{\eta}_{1}
    \xto{\overline{\epsilon}\overline{\eta}_{1}} \overline{\eta}_{1}\] is an
  equivalence. Moreover, $\overline{\eta}\overline{\Psi} \colon \overline{\Psi} \to
  \overline{\eta}_{1} \overline{\Psi}$ is an equivalence, being given by cocartesian
  morphisms over identities, hence $\overline{\Pi} \overline{\Psi} \simeq
  \overline{p}\overline{\Psi} \simeq \id$.
  
  Let $W_{\mathcal{I}}$ be the set of morphisms in
  $\mathcal{I} \times_{\Dop} \hbbS$ that are sent to equivalences by
  $\overline{\Pi}$. If $(x, (i,j)) \to (x',(i',j'))$ is such a
  morphism, then we have a commutative square
  \[
    \begin{tikzcd}
      (x, (i,j)) \arrow{r} \arrow{d} & (x',(i',j')) \arrow{d} \\
      (x_{i,j}, (0,j-i)) \arrow{r}{\sim} & (x'_{i',j'}, (0,j'-i')),
    \end{tikzcd}
    \]
  where the vertical maps are in $I_{\mathcal{I}}$. By the 2-of-3
  property for equivalences, this means that any functor that takes
  the morphisms in $I_{\mathcal{I}}$ to equivalences must take all
  morphisms in $W_{\mathcal{I}}$ to equivalences. Thus the
  localizations of $\mathcal{I} \times_{\Dop} \hbbS$ at
  $I_{\mathcal{I}}$ and $W_{\mathcal{I}}$ are the same. The same
  argument as in the proof of \cref{PiLoc} now shows that
  $\overline{\Pi}$ is exhibits $\mathcal{I}$ as the localization of
  $\mathcal{I} \times_{\Dop} \hbbS$ at $W_{\mathcal{I}}$.
\end{proof}

Combining \cref{SPANunivprop} with \cref{intcocloc}, we get:
\begin{cor}
  Suppose $f \colon \mathcal{I} \to \Dop$ is a functor such that
  $\mathcal{I}$ has $f$-cocartesian morphisms over inert maps in
  $\Dop$. Then there is a fully faithful functor of \icats{}
  \[ \Fun(\mathcal{I}, \mathcal{C}) \hookrightarrow
    \Fun_{\Dop}(\mathcal{I}, \oSPANp(\mathcal{C})) \] that
  identifies $\Fun(\mathcal{I}, \mathcal{C})$ with the functors that
  preserve cocartesian morphisms over inert morphisms in $\Dop$. \qed
\end{cor}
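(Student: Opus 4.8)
The plan is to combine the two preceding results purely formally: the statement is essentially the conjunction of the localization of \cref{intcocloc} with the universal property of \cref{SPANunivprop}, so the only real work is to match up the relevant classes of morphisms.

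First I would invoke \cref{intcocloc} to obtain the functor $\overline{\Pi} \colon \mathcal{I} \times_{\Dop} \hbbS \to \mathcal{I}$ exhibiting $\mathcal{I}$ as the localization of $\mathcal{I} \times_{\Dop} \hbbS$ at the class $I_{\mathcal{I}}$. By the universal property of localizations of \icats{}, restriction along $\overline{\Pi}$ gives a fully faithful functor
\[ \overline{\Pi}^{*} \colon \Fun(\mathcal{I}, \mathcal{C}) \hookrightarrow \Fun(\mathcal{I} \times_{\Dop} \hbbS, \mathcal{C}), \]
whose essential image is precisely the full subcategory of functors $\mathcal{I} \times_{\Dop} \hbbS \to \mathcal{C}$ sending every morphism in $I_{\mathcal{I}}$ to an equivalence. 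I would then compose with the equivalence of \cref{SPANunivprop},
\[ \Fun(\mathcal{I} \times_{\Dop} \hbbS, \mathcal{C}) \simeq \Fun_{/\Dop}(\mathcal{I}, \oSPANp(\mathcal{C})), \]
which is in particular fully faithful; the composite is therefore the desired fully faithful embedding $\Fun(\mathcal{I}, \mathcal{C}) \hookrightarrow \Fun_{/\Dop}(\mathcal{I}, \oSPANp(\mathcal{C}))$.

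It remains to identify the essential image. A functor $F \colon \mathcal{I} \to \oSPANp(\mathcal{C})$ lies in the image \IFF{} its associated functor $F' \colon \mathcal{I} \times_{\Dop} \hbbS \to \mathcal{C}$ inverts every morphism in $I_{\mathcal{I}}$. By the explicit description in \cref{intcocloc}, the class $I_{\mathcal{I}}$ consists exactly of the morphisms $(\phi, \gamma)$ in which $\phi$ is an $f$-cocartesian morphism lying over an inert map and $\gamma$ is a cartesian morphism in $\hbbS$. The second assertion of \cref{SPANunivprop} states that, for a fixed such $\phi$, the functor $F$ sends $\phi$ to a cocartesian morphism \IFF{} $F'$ inverts all $(\phi,\gamma)$ with $\gamma$ cartesian. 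Taking the union over all $f$-cocartesian $\phi$ over inert maps, the condition ``$F'$ inverts $I_{\mathcal{I}}$'' is thus equivalent to ``$F$ preserves cocartesian morphisms over inert morphisms in $\Dop$'', which yields the claimed description of the essential image.

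The step requiring the most care is this last matching: one must verify that the localizing class $I_{\mathcal{I}}$ coincides on the nose with the morphisms appearing in the cocartesian criterion of \cref{SPANunivprop}, so that inverting $I_{\mathcal{I}}$ is neither stronger nor weaker than preservation of cocartesian morphisms over inert maps. Both inclusions are immediate from the definition of $I_{\mathcal{I}}$ — every pair $(\phi,\gamma)$ with $\phi$ $f$-cocartesian over inert and $\gamma$ cartesian already lies in $I_{\mathcal{I}}$, and conversely $I_{\mathcal{I}}$ contains only such pairs — so beyond this bookkeeping no genuine obstacle arises.
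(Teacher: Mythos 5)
Your proposal is correct and follows exactly the route the paper intends: the corollary is stated as an immediate consequence of combining \cref{intcocloc} (the localization of $\mathcal{I} \times_{\Dop} \hbbS$ at $I_{\mathcal{I}}$) with \cref{SPANunivprop} (the universal property together with its criterion for cocartesianness), and your matching of the localizing class $I_{\mathcal{I}}$ with the morphisms $(\phi,\gamma)$ appearing in that criterion is precisely the bookkeeping the paper leaves implicit.
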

Under this equivalence, functors $\mathcal{I} \to
\SPANp(\mathcal{C})$ over $\Dop$ that preserve cocartesian
morphisms over inert maps correspond to functors $F \colon \mathcal{I} \to
\mathcal{C}$ such that for all  $x \in \mathcal{I}$ over $[n] \in
\Dop$, the morphism
\[ F(x) \to F(x_{0,1})\times_{F(x_{1,1})} \cdots
  \times_{F(x_{n-1,n-1})} F(x_{n-1,n})\]
is an equivalence, where $x \to x_{i,j}$ denotes the cocartesian
morphism over $\rho_{i,j}$. In particular, we have:
\begin{cor}\label{AlgSpanSeg} 
  Let $\mathcal{O} \to \Dop$ be a (generalized) non-symmetric \iopd{}
  and $\mathcal{C}$ an \icat{} with pullbacks. Then there is a natural
  equivalence of \icats{}
  \[ \Seg_{\mathcal{O}}(\mathcal{C}) \simeq
    \Alg_{\mathcal{O}}(\SPANp(\mathcal{C})).\]
\end{cor}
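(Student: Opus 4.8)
The plan is to read this off from the Corollary obtained above by combining \cref{SPANunivprop} and \cref{intcocloc}, applied with $\mathcal{I} = \mathcal{O}$. First I would note that this application is legitimate: by axiom (i) in the definition of a \gnsiopd{}, the structure map $\mathcal{O} \to \Dop$ admits cocartesian lifts of every inert morphism, which is exactly the hypothesis required there. That corollary then provides a fully faithful embedding $\Fun(\mathcal{O}, \mathcal{C}) \hookrightarrow \Fun_{/\Dop}(\mathcal{O}, \oSPANp(\mathcal{C}))$ with essential image the functors over $\Dop$ that preserve cocartesian morphisms over inert maps.

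Next I would unwind the right-hand side. By definition $\Alg_{\mathcal{O}}(\SPANp(\mathcal{C}))$ is the full subcategory of $\Fun_{/\Dop}(\mathcal{O}, \SPANp(\mathcal{C}))$ spanned by the morphisms of \gnsiopds{}, that is, the functors over $\Dop$ preserving inert-cocartesian morphisms. Since $\SPANp(\mathcal{C})$ is by construction a full subcategory of $\oSPANp(\mathcal{C})$, giving such a functor is the same as giving a functor $\mathcal{O} \to \oSPANp(\mathcal{C})$ over $\Dop$ that preserves inert-cocartesian morphisms \emph{and} factors through $\SPANp(\mathcal{C})$. Under the embedding above, then, $\Alg_{\mathcal{O}}(\SPANp(\mathcal{C}))$ is identified with the full subcategory of those $F \in \Fun(\mathcal{O}, \mathcal{C})$ whose associated span-functor lands in the Segal subcategory $\SPANp(\mathcal{C})$.

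It remains to match this condition on $F$ with the defining condition of a Segal $\mathcal{O}$-object, which is the point of the discussion following that corollary. The span-functor associated to $F$ sends $x \in \mathcal{O}_{n}$ to the diagram $\bbS^{n} \to \mathcal{C}$ whose value at $(i,j)$ is $F(x_{i,j})$, where $x \to x_{i,j}$ is cocartesian over $\rho_{i,j}$ --- this uses the identity $\overline{\Pi}(x,(i,j)) \simeq x_{i,j}$ established in the proof of \cref{intcocloc}. This diagram lies in $\SPANp(\mathcal{C})$ precisely when each of the maps
\[ F(x_{i,j}) \to F(x_{i,i+1}) \times_{F(x_{i+1,i+1})} \cdots \times_{F(x_{j-1,j-1})} F(x_{j-1,j}) \]
is an equivalence. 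Taking $(i,j) = (0,n)$ and letting $x$ range over all of $\mathcal{O}_{n}$ recovers exactly the Segal $\mathcal{O}$-object condition, while conversely the general $(i,j)$ instance for $x$ is the $(0,j-i)$ instance for $x_{i,j} \in \mathcal{O}_{j-i}$; so the two families of conditions coincide and cut out $\Seg_{\mathcal{O}}(\mathcal{C})$.

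The only step with genuine content is this last comparison of Segal conditions; everything else is an unwinding of definitions. Naturality of the equivalence in $\mathcal{O}$ I expect to follow formally, since both sides are full subcategories of functor \icats{} depending functorially on $\mathcal{O}$ and the embedding of the preceding corollary is itself natural. I therefore do not anticipate a substantive obstacle --- the real work was already carried out in \cref{SPANunivprop} and \cref{intcocloc}, and the statement is an immediate specialization.
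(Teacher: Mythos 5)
Your proposal is correct and takes essentially the same route as the paper: the paper also obtains this corollary by applying the combination of \cref{SPANunivprop} and \cref{intcocloc} (the unnumbered corollary immediately preceding the statement) to $\mathcal{I} = \mathcal{O}$, and then noting that landing in the full subcategory $\SPANp(\mathcal{C})$ amounts to the Segal condition. The comparison of Segal conditions you spell out --- that the $(i,j)$ instance for $x$ is the $(0,j-i)$ instance for $x_{i,j}$, so the full family of span conditions collapses to the $(0,n)$ conditions --- is precisely what the paper leaves implicit in the sentence beginning ``Under this equivalence\ldots''.
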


\begin{propn}\label{SegOfibAlg}
  For $X \in \mathcal{C}$, the fibre of
  $\txt{ev}_{[0]} \colon \Seg_{\mathcal{O}}(\mathcal{C}) \to
  \Fun(\mathcal{O}_{0},\mathcal{C})$  at the constant functor with
  value $X$ is naturally equivalent to $\Alg_{\mathcal{O}}(\mathcal{C}_{/X,X}^{\otimes})$.
\end{propn}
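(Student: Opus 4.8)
The plan is to rewrite $\txt{ev}_{[0]}$ as a map induced on $\mathcal{O}$-algebras by a morphism of double \icats{}, and then identify the fibre with the $\mathcal{O}$-algebras in a pullback. First I would use \cref{AlgSpanSeg} to replace $\Seg_{\mathcal{O}}(\mathcal{C})$ by $\Alg_{\mathcal{O}}(\SPANp(\mathcal{C}))$, and \cref{AlgDopC} to replace $\Fun(\mathcal{O}_{0},\mathcal{C})$ by $\Alg_{\mathcal{O}}(\Dop_{\mathcal{C}})$. Since $\bbS^{0}$ is the single object $(0,0)$, we have $\SPANp(\mathcal{C})_{0} \simeq \mathcal{C}$, so the canonical morphism of double \icats{} $q \colon \SPANp(\mathcal{C}) \to \Dop_{\mathcal{C}}$ constructed just before \cref{AlgDopC} is the identity on the fibre over $[0]$. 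Tracking the two equivalences, I would check that under these identifications $\txt{ev}_{[0]}$ corresponds to $\Alg_{\mathcal{O}}(q)$: both are given by restricting along $\mathcal{O}_{0} \hookrightarrow \mathcal{O}$ to the fibre over $[0]$, where $q$ acts as the identity.

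Next I would recognize the constant functor with value $X$ as a canonical point. The object $X \in \mathcal{C} \simeq (\Dop_{\mathcal{C}})_{0}$ determines, as in \cref{DblMon}, a morphism of double \icats{} $x \colon \Dop \to \Dop_{\mathcal{C}}$, and the defining pullback of $\mathcal{C}^{\otimes}_{/X,X} = \SPANp(\mathcal{C})_{X}$ is exactly the pullback of $q$ along $x$. Because $\Dop$ is the terminal \gnsiopd{}, we have $\Alg_{\mathcal{O}}(\Dop) \simeq *$, and under \cref{AlgDopC} the image of this point along $\Alg_{\mathcal{O}}(x)$ is precisely the algebra restricting to the constant functor $X$ on $\mathcal{O}_{0}$. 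Hence the fibre of $\txt{ev}_{[0]}$ at the constant functor $X$ is identified with the pullback
\[ \Alg_{\mathcal{O}}(\SPANp(\mathcal{C})) \times_{\Alg_{\mathcal{O}}(\Dop_{\mathcal{C}})} \Alg_{\mathcal{O}}(\Dop). \]

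It then remains to show that $\Alg_{\mathcal{O}}(-)$ carries the pullback square defining $\mathcal{C}^{\otimes}_{/X,X}$ to a pullback square, i.e.\ that
\[ \Alg_{\mathcal{O}}(\mathcal{C}^{\otimes}_{/X,X}) \simeq \Alg_{\mathcal{O}}(\SPANp(\mathcal{C})) \times_{\Alg_{\mathcal{O}}(\Dop_{\mathcal{C}})} \Alg_{\mathcal{O}}(\Dop). \]
The functor $\Fun_{/\Dop}(\mathcal{O},-)$ preserves limits, so it suffices to see that the full subcategories of $\mathcal{O}$-algebras are detected componentwise. Here I would use that $\mathcal{C}^{\otimes}_{/X,X}$ is a pullback of cocartesian fibrations over $\Dop$ (as recorded in \cref{DblMon}): a morphism in it lying over an inert map of $\Dop$ is cocartesian \IFF{} its image in $\SPANp(\mathcal{C})$ is cocartesian, the image in $\Dop$ being automatically so. Consequently a functor $\mathcal{O} \to \mathcal{C}^{\otimes}_{/X,X}$ over $\Dop$ preserves cocartesian lifts of inert maps \IFF{} its projection to $\SPANp(\mathcal{C})$ does. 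This identifies the algebra subcategory inside the pullback with the pullback of the algebra subcategories, yielding the displayed equivalence; naturality in $X$ follows since each step is natural in the input datum.

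I expect the main obstacle to be the bookkeeping in the first step --- verifying that the abstract map $\Alg_{\mathcal{O}}(q)$ really is $\txt{ev}_{[0]}$ under \cref{AlgSpanSeg,AlgDopC} --- rather than the pullback computation, which is formal once one observes that inert-cocartesian morphisms in $\mathcal{C}^{\otimes}_{/X,X}$ are detected on the $\SPANp(\mathcal{C})$-component.
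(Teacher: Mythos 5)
Your proposal is correct and follows essentially the same route as the paper: identify $\txt{ev}_{[0]}$ with $\Alg_{\mathcal{O}}(q)$ for the canonical map $q \colon \SPANp(\mathcal{C}) \to \Dop_{\mathcal{C}}$ via \cref{AlgSpanSeg} and \cref{AlgDopC}, realize the constant functor at $X$ as the image of the point of $\Alg_{\mathcal{O}}(\Dop)$, and conclude because $\Alg_{\mathcal{O}}(\blank)$ carries the defining pullback of $\mathcal{C}^{\otimes}_{/X,X}$ to a pullback. The only difference is that you spell out the limit-preservation step (detecting inert-cocartesian morphisms on the $\SPANp(\mathcal{C})$-component), which the paper leaves implicit.
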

\begin{proof}
  Combining the equivalences of \cref{AlgSpanSeg} and
  \cref{AlgDopC}, we can identify $\Seg_{\mathcal{O}}(\mathcal{C}) \to
  \Fun(\mathcal{O}_{0},\mathcal{C})$ with the functor
  $\Alg_{\mathcal{O}}(\SPANp(\mathcal{C})) \to
  \Alg_{\mathcal{O}}(\Dop_{\mathcal{C}})$; the constant functor
  $\mathcal{O}_{0} \to \mathcal{C}$ corresponds to the composite
  $\mathcal{O} \to \Dop \to \Dop_{\mathcal{C}}$ where the second
  morphism is the associative algebra in $\Dop_{\mathcal{C}}$
  associated to $X \in \mathcal{C}$. Since
  $\Alg_{\mathcal{O}}(\blank)$ preserves limits, the fibre we want is
  given by the pullback square
  \[
    \begin{tikzcd}
      {} &[-3.45em] \Alg_{\mathcal{O}}(\mathcal{C}^{\otimes}_{/X,X}) \arrow{d}
      \arrow{r} & \Alg_{\mathcal{O}}(\SPANp(\mathcal{C})) \arrow{d}
      \\
      * \simeq & \Alg_{\mathcal{O}}(\Dop) \arrow{r} & \Alg_{\mathcal{O}}(\Dop_{\mathcal{C}}),
    \end{tikzcd}
  \]
  as required.
\end{proof}

\section{Quasi-Unital Category Objects}\label{sec:qucat}
Our goal in this section is to prove that ``having identities'' is a
property of a category object. We will prove this by using the results
of the previous section to reduce to the case of associative algebras,
which has already been proved by Lurie.  We begin by recalling Lurie's
result, which requires introducing some notation:

\begin{defn}
  Let $\Dinjop$ denote the subcategory of $\Dop$ containing only the
  injective maps; this is a \nsiopd{}, and its algebras are non-unital
  associative algebras.  If $\mathcal{O}$ is a \gnsiopd{}, we
  write $\Algnu(\mathcal{O})$ for the \icat{}
  $\Alg_{\Dinjop}(\mathcal{O})$ of non-unital associative algebras in
  $\mathcal{O}$. The inclusion $j \colon \Dinjop \to \Dop$ is a
  morphism of \nsiopds{}, and induces the expected forgetful functor
  $j^{*}\colon \Alg(\mathcal{O}) \to \Algnu(\mathcal{O})$.
\end{defn}

\begin{defn}
  Let $\mathcal{V}^{\otimes} \to \Dop$ be a monoidal \icat{}. If $A$
  is a non-unital associative algebra in $\mathcal{V}$,
  a \emph{quasi-unit} for $A$ is a 
  morphism $u \colon I \to A$ such that the composite
  \[ A \simeq I \otimes A \xto{u \otimes A} A \otimes A \xto{m} A,\]
  where $m$ is the algebra multiplication, is equivalent to $\id_{A}$,
  and similarly for the map with $u$ on the other side. We say that
  a non-unital algebra $A$ is \emph{quasi-unital} if there exists a
  quasi-unit for $A$. If $A$ and $B$
  are quasi-unital algebras, we say that a morphism $f \colon A \to B$
  in $\Algnu(\mathcal{V})$ is \emph{quasi-unital} if $f \circ u$ is a
  quasi-unit for $B$, where $u \colon I \to A$ is a quasi-unit for
  $A$.  
\end{defn}

\begin{warning}
  We emphasize that being quasi-unital is a \emph{property} of a
  non-unital algebra. In particular, the data of a quasi-unit is
  \emph{not} part of the structure of a quasi-unital algebra, we are
  merely asserting that it is possible to choose one. 
\end{warning}

\begin{defn}
  Let $\Algqu(\mathcal{V})$ denote the subcategory of
  $\Algnu(\mathcal{V})$ whose objects are the quasi-unital algebras,
  and whose morphisms are the quasi-unital ones.
\end{defn}

\begin{thm}[Lurie, \cite{HA}*{Theorem 5.4.3.5}]\label{qualg}
  If $\mathcal{V}$ is a monoidal \icat{}, then the functor
  $j^{*} \colon \Alg(\mathcal{V}) \to \Algnu(\mathcal{V})$ induces an equivalence
  \[ \Alg(\mathcal{V}) \isoto \Algqu(\mathcal{V})\] onto the
  quasi-unital subcategory.
\end{thm}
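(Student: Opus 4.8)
The plan is to deduce the equivalence by checking that $j^{*}$ factors through $\Algqu(\mathcal{V})$ and then that the induced functor is fully faithful and essentially surjective. The factoring is formal: for a unital algebra $A$ the unit map $I \to A$ is a quasi-unit by the unit axioms, and a homomorphism of unital algebras carries unit to unit and is therefore quasi-unital. So the real content is that $\Alg(\mathcal{V}) \to \Algqu(\mathcal{V})$ is an equivalence, and I would organize the argument around a single principle: a unital structure on a non-unital algebra is a \emph{property}, not extra data.

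Concretely, the key claim I would isolate is that for any $A \in \Algnu(\mathcal{V})$ the space of unital structures on $A$ lifting its given non-unital structure is empty when $A$ is not quasi-unital and contractible when it is. Granting this, essential surjectivity is immediate, since a quasi-unital algebra then admits an essentially unique unital lift. Full faithfulness would follow from a relative version of the same statement: a quasi-unital morphism of unital algebras is exactly a non-unital morphism that preserves units up to homotopy, and the space of coherent unit-preservation data refining such a morphism is again contractible, so $\Map_{\Alg(\mathcal{V})}(A,B) \to \Map_{\Algnu(\mathcal{V})}(A,B)$ is precisely the inclusion of the components consisting of quasi-unital maps, which is what identifies $\Alg(\mathcal{V})$ with $\Algqu(\mathcal{V})$.

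To prove the crucial contractibility I would reduce it to the theory of idempotents. A quasi-unit $u \colon I \to A$ is a two-sided identity up to homotopy, and in particular the composite
\[ I \simeq I \otimes I \xto{u \otimes u} A \otimes A \xto{m} A \]
is equivalent to $u$, so $u$ behaves as an idempotent. I would then invoke Lurie's theory of idempotents \cite{HTT}*{\S 4.4.5}, by which an idempotent in an \icat{} extends essentially uniquely to a fully coherent idempotent, the space of such coherent refinements being contractible. The remaining and hardest step is to construct the precise dictionary translating the extra coherences demanded by the unital operad $\Dop$ over the non-unital operad $\Dinjop$ — the degeneracy maps together with their compatibilities with the face maps — into coherent idempotent data supported on the quasi-unit $u$. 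Setting up this operadic bookkeeping carefully is where I expect the main obstacle to lie; once the identification is in place, the contractibility of coherent idempotent extensions yields the contractibility of unital structures, and the theorem follows formally.
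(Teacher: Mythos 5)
First, a remark on the comparison you asked for: the paper does not prove \cref{qualg} at all. It is imported verbatim from Lurie as \cite{HA}*{Theorem 5.4.3.5} and used as a black box (it is the main external input to \cref{qucat}). So there is no in-paper argument to measure your outline against; the assessment below is of your proposal on its own terms and against Lurie's actual proof.

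Your reduction of the theorem to the contractibility of the space of unital refinements of a quasi-unital algebra, together with a relative version for morphisms, is a reasonable way to organize the statement, and the observation that $m \circ (u \otimes u) \simeq u$ is correct. The genuine gap is the appeal to \cite{HTT}*{\S 4.4.5}: it is \emph{not} true that a homotopy idempotent in an \icat{} extends essentially uniquely to a coherent idempotent, nor that the space of coherent refinements is contractible. This is exactly the content of the well-known correction to \cite{HTT}*{Proposition 4.4.5.20}: a morphism $e$ with a chosen homotopy $e^{2} \simeq e$ may admit no coherent idempotent refinement at all (the Freyd--Heller idempotent), and when refinements exist they need not be unique; lifting requires additional coherence data that a quasi-unit is not assumed to carry. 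So the ``crucial contractibility'' cannot be extracted from idempotent theory, and the step you defer as ``operadic bookkeeping'' --- producing all the degeneracies of $\Dop$ over $\Dinjop$ together with their simplicial compatibilities from the single map $u$ --- is not a translation exercise but the entire content of the theorem. What actually makes the relevant spaces of lifts contractible is the coherent associativity of $m$ interacting with the two-sided unit condition, not the idempotence of $u$ alone, and Lurie's proof is correspondingly a direct and fairly lengthy analysis of the inclusion $\Dinjop \hookrightarrow \Dop$ rather than a reduction to idempotents. As written, your argument rests on a false general principle at its pivotal step and leaves its admitted hardest step unexecuted.
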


For the rest of this section we fix an \icat{} $\mathcal{C}$ with
finite limits. We can then extend the definitions above to category
objects in $\mathcal{C}$:
\begin{defn}
  A \emph{non-unital category object} in $\mathcal{C}$ is a Segal
  $\Dinjop$-object, \ie{} a functor $\Dinjop \to \mathcal{C}$
  satisfing the same limit condition as a category object. We
  write $\Catnu(\mathcal{C})$ for the \icat{}
  $\Seg_{\Dinjop}(\mathcal{C})$ of non-unital category objects.
\end{defn}

\begin{defn}
  Let $X \colon \Dinjop \to \mathcal{C}$ be a non-unital category
  object. A \emph{quasi-unit} for $X$ is a 
  commutative diagram
  \[
    \begin{tikzcd}
      X_{0} \arrow{rrr}{u} \arrow[equals]{dr} \arrow[equals]{drr} & &
      & X_{1} \arrow[crossing over]{dl}{d_{0}} \arrow{dll}[above left]{d_{1}} \\
       & X_{0} & X_{0}
    \end{tikzcd}
  \]
  such that the composite
  \[ X_{1} \simeq X_{0} \times_{X_{0}} X_{1} \xto{u \times_{X_{0}}
      X_{1}} X_{1} \times_{X_{0}} X_{1} \simeq X_{2} \xto{d_{1}}
    X_{1} \] is equivalent to the identity, and similarly for the
  morphism with $u$ on the other side.  We say a non-unital category
  object $X$ is \emph{quasi-unital} if there exists a quasi-unit for $X$.
\end{defn}

\begin{remark}
  A non-unital category object $X$  in $\mathcal{C}$ is quasi-unital \IFF{} $X$ is
  quasi-unital when viewed as a non-unital associative algebra in
  $\mathcal{C}_{/X_{0},X_{0}}^{\otimes}$. Any two quasi-units are
  therefore equivalent, by \cite{HA}*{Remark 5.4.3.2}.
\end{remark}

\begin{remark}\label{cartqunital}
  The functor
  $\txt{ev}_{[0]} \colon \Catnu(\mathcal{C}) \to \mathcal{C}$ is a
  cartesian fibration by \cref{Seg0cart}.  Suppose $X$ is a non-unital
  category object and $u \colon X_{0} \to X_{1}$ is a quasi-unit for
  $X$. For $f \colon Y \to X_{0}$ in $\mathcal{C}$, let $f^{*}X \to X$
  denote the cartesian morphism in $\Catnu(\mathcal{C})$ over
  $f$. Then we have a diagram
  \[
    \begin{tikzcd}
      Y \arrow{r}{f} \arrow[dashed]{d}{f^{*}u} & X_{0} \arrow{d}{u} \\
      (f^{*}X)_{1} \arrow{r} \arrow{d} & X_{1} \arrow{d} \\
      Y_{0} \times Y_{0} \arrow{r}  & X_{0} \times X_{0},
    \end{tikzcd}
  \]
  where the morphism $f^{*}u$ exists since the bottom square is
  cartesian. The morphism $f^{*}u$ is then a quasi-unit for
  $f^{*}X$. In other words, if $X$ is quasi-unital then so is $f^{*}X$
  for any $f \colon Y \to X_{0}$.
\end{remark}

\begin{defn}
  Suppose $X$ and $Y \in \Catnu(\mathcal{C})$ are quasi-unital. A
  morphism $\phi \colon X \to Y$ is \emph{quasi-unital} if there
  exists a commutative diagram
  \[
    \begin{tikzcd}
      X_{0} \arrow{ddr}{\Delta} \arrow{rrr}{\phi_{0}} \arrow{drr}{u} &
      & & Y_{0} \arrow{ddr}[below left,near start]{\Delta} \arrow{drr}{v} \\
           &  & X_{1} \arrow{dl}{(d_{0},d_{1})}\arrow[crossing
           over]{rrr}[below,near start]{\phi_{1}} & & & Y_{1} \arrow{dl}{(d_{0},d_{1})}\\
       & X_{0} \times X_{0} \arrow{rrr}{\phi_{0} \times \phi_{0}}& & & Y_{0} \times Y_{0},
    \end{tikzcd}
  \]
  where $u$ and $v$ are quasi-units for $X$ and $Y$, respectively. We
  write $\Catqu(\mathcal{C})$ for the subcategory of
  $\Catnu(\mathcal{C})$ containing the quasi-unital objects and the
  quasi-unital morphisms between them.
\end{defn}

\begin{remark}\label{quiffcartqu}
  From \cref{cartqunital} and the uniqueness of quasi-units we see
  that a morphism $\phi \colon X \to Y$ is quasi-unital \IFF{}
  $X \to \phi_{0}^{*}Y$ is quasi-unital. Moreover, the latter is
  quasi-unital \IFF{} it corresponds to a quasi-unital morphism
  between non-unital algebras in
  $\mathcal{C}_{/X_{0},X_{0}}^{\otimes}$.
\end{remark}

\begin{propn}
  Suppose $p \colon \mathcal{E} \to \mathcal{B}$ is a cartesian
  fibration, and $\mathcal{E}_{0}$ is a subcategory of $\mathcal{E}$
  such that
  \begin{enumerate}[(i)]
  \item for $x \in \mathcal{E}_{0}$ and $f \colon b \to p(x)$ in
    $\mathcal{B}$, the cartesian morphism $f^{*}x \to x$ lies in
    $\mathcal{E}_{0}$.
  \item if $x$ and $y$ are objects of $\mathcal{E}_{0}$ then a
    morphism $\phi \colon x \to y$ in $\mathcal{E}$ lies in
    $\mathcal{E}_{0}$ \IFF{} $x \to p(\phi)^{*}y$ lies in $\mathcal{E}_{0}$.
  \end{enumerate}
  Then $p|_{\mathcal{E}_{0}} \colon \mathcal{E}_{0} \to \mathcal{B}$
  is a cartesian fibration, and a morphism in $\mathcal{E}_{0}$ is
  cartesian \IFF{} its image in $\mathcal{E}$ is cartesian.
\end{propn}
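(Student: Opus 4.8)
The plan is to verify the two defining features of a cartesian fibration for $p|_{\mathcal{E}_{0}}$: that it admits enough cartesian lifts, and that we can say which of its morphisms are cartesian. Throughout I would use the mapping-space criterion, that a morphism $\phi \colon x \to y$ in $\mathcal{E}$ is $p$-cartesian \IFF{} for every $z \in \mathcal{E}$ the square
\[
  \begin{tikzcd}
    \Map_{\mathcal{E}}(z,x) \arrow{r}{\phi\circ\blank} \arrow{d} & \Map_{\mathcal{E}}(z,y) \arrow{d} \\
    \Map_{\mathcal{B}}(pz,px) \arrow{r} & \Map_{\mathcal{B}}(pz,py)
  \end{tikzcd}
\]
is cartesian, with the analogous statement for $p|_{\mathcal{E}_{0}}$ obtained by replacing each mapping space in $\mathcal{E}$ by its subspace of components lying in $\mathcal{E}_{0}$ (the $\mathcal{B}$-mapping spaces being unchanged). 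The whole argument rests on a single cancellation property extracted from (ii), which I expect to be the essential point; everything else is then formal.

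The key step is the following cancellation: if $\phi \colon x \to y$ is a $p$-cartesian morphism lying in $\mathcal{E}_{0}$ and $\alpha \colon z \to x$ is \emph{any} morphism of $\mathcal{E}$ with $z \in \mathcal{E}_{0}$, then $\alpha$ lies in $\mathcal{E}_{0}$ \IFF{} $\phi \circ \alpha$ does. To see this I would compare the canonical factorizations $z \to p(\alpha)^{*}x \to x$ of $\alpha$ and $z \to p(\phi\alpha)^{*}y \to y$ of $\phi\alpha$ into a vertical morphism followed by a cartesian one. Since $\phi$ is $p$-cartesian we may identify $x \simeq p(\phi)^{*}y$, and functoriality of the pullbacks gives $p(\phi\alpha)^{*}y \simeq p(\alpha)^{*}p(\phi)^{*}y \simeq p(\alpha)^{*}x$; as a composite of cartesian morphisms is cartesian, uniqueness of the factorization identifies the two vertical parts as the same morphism $z \to p(\alpha)^{*}x$. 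By (i) the object $p(\alpha)^{*}x$ lies in $\mathcal{E}_{0}$, so (ii) applies to both $\alpha$ and $\phi\alpha$ and shows that each lies in $\mathcal{E}_{0}$ \IFF{} their common vertical part does; the claim follows.

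Granting this, I would first show that a $p$-cartesian morphism $\phi$ lying in $\mathcal{E}_{0}$ is $p|_{\mathcal{E}_{0}}$-cartesian. For $z \in \mathcal{E}_{0}$ the $p$-cartesian square above exhibits $\Map_{\mathcal{E}}(z,x)$ as the pullback, under which a morphism $\alpha$ corresponds to the pair $(\phi\alpha, p\alpha)$; the cancellation property says precisely that the component of $\alpha$ lies in $\mathcal{E}_{0}$ \IFF{} that of $\phi\alpha$ does, so (the base factor being unconstrained) the pullback square restricts to the $\mathcal{E}_{0}$-subspaces, as required. Existence of cartesian lifts in $\mathcal{E}_{0}$ is then immediate: for $y \in \mathcal{E}_{0}$ and $f \colon b \to p(y)$, the $p$-cartesian lift $f^{*}y \to y$ lies in $\mathcal{E}_{0}$ by (i) and is $p|_{\mathcal{E}_{0}}$-cartesian by what was just proved, so $p|_{\mathcal{E}_{0}}$ is a cartesian fibration.

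Finally, for the characterization it remains to see that a $p|_{\mathcal{E}_{0}}$-cartesian morphism $\psi \colon x \to y$ is already $p$-cartesian in $\mathcal{E}$, and this is the usual uniqueness argument: choosing a $p$-cartesian lift $\psi' \colon x' \to y$ of $p(\psi)$, which lies in $\mathcal{E}_{0}$ by (i) and is $p|_{\mathcal{E}_{0}}$-cartesian by the previous paragraph, the two $p|_{\mathcal{E}_{0}}$-cartesian lifts $\psi$ and $\psi'$ of the same morphism with target $y$ must be equivalent, so $\psi \simeq \psi'$ is $p$-cartesian. The only delicate point in the whole proof is the vertical-part identification in the key step; once that is in place, the mapping-space manipulations and the uniqueness argument are entirely routine.
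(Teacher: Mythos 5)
Your proof is correct and follows essentially the same route as the paper: both verify cartesianness via the pullback-square criterion on mapping spaces, with your ``cancellation'' step being exactly the justification the paper compresses into the assertion that the restricted square is cartesian ``by assumption (ii)''. Your write-up is somewhat more explicit than the paper's (which leaves the identification of the two vertical factors, and the converse direction of the final characterization, to the reader), but there is no difference in substance.
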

\begin{proof}
  Given $x \in \mathcal{E}_{0}$ and $f \colon b \to p(x)$ we must show
  that the cartesian morphism $f^{*}x \to x$ in $\mathcal{E}$ is
  cartesian when viewed as a morphism in $\mathcal{E}_{0}$. For $y \in
  \mathcal{E}_{0}$ we have a commutative diagram
  \[
    \begin{tikzcd}
      \Map_{\mathcal{E}_{0}}(y, f^{*}x) \arrow{r} \arrow{d} &
      \Map_{\mathcal{E}_{0}}(y,x) \arrow{d} \\
      \Map_{\mathcal{E}}(y, f^{*}x) \arrow{r} \arrow{d} &
      \Map_{\mathcal{E}}(y,x) \arrow{d} \\
      \Map_{\mathcal{B}}(p(y), b) \arrow{r} & \Map_{\mathcal{B}}(p(y),p(x)).
    \end{tikzcd}
  \]
  Here the top square is cartesian by assumption (ii) and the bottom
  square is cartesian since $f^{*}x \to x$ is cartesian. It follows
  that the composite square is cartesian, which completes the proof.
\end{proof}

Combined with \cref{quiffcartqu} and \cref{Seg0cart}, this gives:
\begin{cor}\label{Catqucart}
  The functor $\txt{ev}_{[0]}\colon \Catqu(\mathcal{C}) \to
  \mathcal{C}$ is a cartesian fibration; a morphism in
  $\Catqu(\mathcal{C})$ is cartesian \IFF{} its image in
  $\Catnu(\mathcal{C})$ is cartesian. \qed
\end{cor}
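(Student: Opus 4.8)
The plan is to apply the Proposition just proved, taking $\mathcal{E} = \Catnu(\mathcal{C})$, $\mathcal{B} = \mathcal{C}$, the functor $p = \txt{ev}_{[0]}$, and the subcategory $\mathcal{E}_{0} = \Catqu(\mathcal{C})$. Its conclusion---that $p|_{\mathcal{E}_{0}}$ is a cartesian fibration whose cartesian morphisms are detected after passing to $\mathcal{E}$---is verbatim the corollary, so everything reduces to checking the two hypotheses of that Proposition together with the ambient cartesian fibration.

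First I would record that $p = \txt{ev}_{[0]}$ is a cartesian fibration on $\Catnu(\mathcal{C}) = \Seg_{\Dinjop}(\mathcal{C})$. This is \cref{Seg0cart} applied to $\mathcal{O} = \Dinjop$: since $(\Dinjop)_{0}$ is a point, $\Fun((\Dinjop)_{0}, \mathcal{C}) \simeq \mathcal{C}$ and the restriction functor $i_{\mathcal{O}}^{*}$ is identified with evaluation at $[0]$.

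Next comes hypothesis (i): for quasi-unital $X$ and $f \colon Y \to X_{0}$, the cartesian lift $f^{*}X \to X$ should lie in $\Catqu(\mathcal{C})$. The object $f^{*}X$ is quasi-unital by \cref{cartqunital}; moreover the diagram there produces a quasi-unit $f^{*}u$ for $f^{*}X$ lying over the chosen quasi-unit $u$ for $X$ along the cartesian morphism, and this compatibility is exactly the data exhibiting $f^{*}X \to X$ as a quasi-unital morphism. Hence the cartesian lift is a morphism of $\Catqu(\mathcal{C})$. Hypothesis (ii)---that $\phi \colon X \to Y$ between quasi-unital objects is quasi-unital \IFF{} $X \to \phi_{0}^{*}Y$ is---is precisely \cref{quiffcartqu}. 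With these in hand the Proposition yields both assertions.

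I do not anticipate a genuine obstacle, since the substance lives in \cref{cartqunital} and \cref{quiffcartqu}. The only point demanding care is in verifying (i): one must observe that \cref{cartqunital} establishes not merely that $f^{*}X$ is a quasi-unital \emph{object} but that the cartesian morphism $f^{*}X \to X$ is itself a quasi-unital \emph{morphism}, which is what membership in the subcategory $\Catqu(\mathcal{C})$ actually requires.
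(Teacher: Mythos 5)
Your proposal is correct and matches the paper's argument exactly: the paper derives this corollary by combining the preceding Proposition (with $\mathcal{E} = \Catnu(\mathcal{C})$, $\mathcal{E}_0 = \Catqu(\mathcal{C})$, $p = \txt{ev}_{[0]}$) with \cref{Seg0cart}, \cref{cartqunital}, and \cref{quiffcartqu}, which is precisely your plan. Your closing remark about hypothesis (i) is the right point of care, and it is indeed settled by the compatibility of $f^{*}u$ with $u$ exhibited in \cref{cartqunital} (or, alternatively, by noting via \cref{quiffcartqu} that the cartesian morphism $f^{*}X \to X$ is quasi-unital because $\id_{f^{*}X}$ is).
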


\begin{thm}\label{qucat} 
  The functor $j^{*} \colon \Cat(\mathcal{C}) \to \Catnu(\mathcal{C})$ induces
  an equivalence
  \[ \Cat(\mathcal{C}) \isoto \Catqu(\mathcal{C})\] onto the
  quasi-unital subcategory.
\end{thm}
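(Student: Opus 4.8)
The plan is to realize $j^{*}$ as a global, fibrewise enhancement of Lurie's equivalence \cref{qualg}. Both $\Cat(\mathcal{C})$ and $\Catnu(\mathcal{C}) = \Seg_{\Dinjop}(\mathcal{C})$ are cartesian fibrations over $\mathcal{C}$ via $\txt{ev}_{[0]}$, by \cref{Seg0cart} applied to $\mathcal{O} = \Dop$ and $\mathcal{O} = \Dinjop$ (in both cases $\mathcal{O}_{0} \simeq *$, so $\Fun(\mathcal{O}_{0},\mathcal{C}) \simeq \mathcal{C}$). Since $j^{*}$ commutes with $\txt{ev}_{[0]}$ it is a functor over $\mathcal{C}$, and \cref{SegOfibAlg} identifies its restriction to the fibre over $X$ with the forgetful functor $\Alg(\mathcal{C}_{/X,X}^{\otimes}) \to \Algnu(\mathcal{C}_{/X,X}^{\otimes})$. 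The strategy is then to check that $j^{*}$ factors through $\Catqu(\mathcal{C})$, that the resulting functor preserves cartesian morphisms, and that it is a fibrewise equivalence; a functor between cartesian fibrations over $\mathcal{C}$ with these last two properties is automatically an equivalence.

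First I would verify that $j^{*}$ preserves cartesian morphisms for the two fibrations. The cartesian lift of $\eta \colon F \to \txt{ev}_{[0]}\Phi$ described in \cref{Seg0cart} is a pullback built out of the right Kan extensions $i_{\mathcal{O},*}$, and the formula of \cref{i0RKE} shows these depend only on the values at the vertices $X_{i,i}$. As restriction along $j \colon \Dinjop \hookrightarrow \Dop$ preserves pullbacks and is compatible with these vertex-products, it carries the cartesian lift computed in $\Cat(\mathcal{C})$ to the one in $\Catnu(\mathcal{C})$.

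Next I would establish the factorization through $\Catqu(\mathcal{C})$. On the fibre over $X$, $j^{*}$ becomes the forgetful functor $\Alg(\mathcal{C}_{/X,X}^{\otimes}) \to \Algnu(\mathcal{C}_{/X,X}^{\otimes})$, which lands in $\Algqu(\mathcal{C}_{/X,X}^{\otimes})$ by \cref{qualg}; combined with the remark preceding \cref{cartqunital}, this shows that every category object forgets to a quasi-unital object. For a morphism $\phi$ of category objects I would factor it through its cartesian lift over $\phi_{0}$; since $j^{*}$ preserves cartesian morphisms, \cref{quiffcartqu} reduces quasi-unitality of $j^{*}\phi$ to that of a morphism of algebras in the fibre, which holds by \cref{qualg}. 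This yields a functor $\tilde{j} \colon \Cat(\mathcal{C}) \to \Catqu(\mathcal{C})$ over $\mathcal{C}$.

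Finally, I would apply the recognition principle. By \cref{Catqucart} the functor $\txt{ev}_{[0]} \colon \Catqu(\mathcal{C}) \to \mathcal{C}$ is a cartesian fibration whose cartesian morphisms are exactly those that become cartesian in $\Catnu(\mathcal{C})$, so $\tilde{j}$ preserves cartesian morphisms. The fibre of $\Catqu(\mathcal{C})$ over $X$ is precisely $\Algqu(\mathcal{C}_{/X,X}^{\otimes})$ (for objects by the remark preceding \cref{cartqunital}, for morphisms by \cref{quiffcartqu}), and under this identification $\tilde{j}$ restricts to Lurie's equivalence $\Alg(\mathcal{C}_{/X,X}^{\otimes}) \isoto \Algqu(\mathcal{C}_{/X,X}^{\otimes})$ of \cref{qualg}. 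Thus $\tilde{j}$ is a fibrewise equivalence between cartesian fibrations that preserves cartesian morphisms, and is therefore an equivalence. I expect the main obstacle to be the compatibility bookkeeping in the second and third paragraphs, namely showing that $j^{*}$ preserves cartesian morphisms and matches Lurie's functor on fibres; once this is secured, the passage from the fibrewise equivalence to the global one is formal.
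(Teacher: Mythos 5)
Your proposal is correct and follows essentially the same route as the paper: exhibit both sides as cartesian fibrations over $\mathcal{C}$ via $\txt{ev}_{[0]}$ (\cref{Seg0cart}), check that $j^{*}$ preserves cartesian morphisms and lands in $\Catqu(\mathcal{C})$, and then reduce via \cref{Catqucart}, \cref{SegOfibAlg}, and \cref{quiffcartqu} to Lurie's fibrewise equivalence \cref{qualg}. The only cosmetic difference is that you justify the factorization through $\Catqu(\mathcal{C})$ by a fibrewise argument where the paper treats it as immediate (the degeneracy $s_{0}$ supplies the quasi-unit), and you phrase the preservation of cartesian morphisms via the vertex-product formula of \cref{i0RKE} rather than the equivalence $j^{*}i_{\Dop,*} \simeq i_{\Dinjop,*}$ — these are the same observation.
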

\begin{proof}
  We have a commutative triangle
  \[
    \begin{tikzcd}
      \Cat(\mathcal{C}) \arrow{dr}[below left]{i_{\Dop}^{*}} \arrow{rr}{j^{*}} & &
      \Catnu(\mathcal{C}) \arrow{dl}{i_{\Dinjop}^{*}} \\
      & \mathcal{C}.
    \end{tikzcd}
  \]
  Here the diagonal functors are both cartesian fibrations by
  \cref{Seg0cart}. We claim that $j^{*}$ also preserves cartesian
  morphisms. Using the description of the cartesian morphisms in
  \cref{Seg0cart}, this amounts to observing that the canonical
  natural transformation $j^{*}i_{\Dop,*} \to i_{\Dinjop,*}$ is
  clearly an equivalence. The functor $j^{*}$ obviously factors
  through the subcategory $\Catqu(\mathcal{C})$, so by
  \cref{Catqucart} we get a commutative triangle
  \[
    \begin{tikzcd}
      \Cat(\mathcal{C}) \arrow{dr}[below left]{i_{\Dop}^{*}} \arrow{rr}{j^{*}_{\txt{qu}}} & &
      \Catqu(\mathcal{C}) \arrow{dl}{i_{\Dinjop,\txt{qu}}^{*}} \\
      & \mathcal{C},
    \end{tikzcd}
  \]
  where the diagonal functors are cartesian fibrations, and the
  horizontal functor preserves cartesian morphisms. To prove that
  $j^{*}_{\txt{qu}}$ is an equivalence, it therefore suffices to prove
  that for every object $X \in \mathcal{C}$ the functor
  \[ \Cat(\mathcal{C})_{X} \to \Catqu(\mathcal{C})_{X}\]
  on fibres over $X$ is an equivalence. But by \cref{quiffcartqu} and
  \cref{SegOfibAlg} we can identify this with the restriction of
  $j^{*} \colon \Alg(\mathcal{C}^{\otimes}_{/X,X}) \to
  \Algnu(\mathcal{C}^{\otimes}_{/X,X})$ to a functor
  \[ \Alg(\mathcal{C}^{\otimes}_{/X,X}) \to
    \Algqu(\mathcal{C}^{\otimes}_{/X,X}),\]
  which is an equivalence by \cref{qualg}.  
\end{proof}

We can inductively define the \icat{} of \emph{$n$-uple category
  objects} in $\mathcal{C}$ as
\[\Cat^{n}(\mathcal{C}) :=
\Cat(\Cat^{n-1}(\mathcal{C}));\] this corresponds to a full subcategory
of $\Fun(\simp^{n,\op}, \mathcal{C})$. Similarly, we can define
$\Catnu^{n}(\mathcal{C})$ and $\Catqu^{n}(\mathcal{C})$. Applying
\cref{qucat} inductively, we get:
\begin{cor}\label{nuple}
  The restriction functor $\Cat^{n}(\mathcal{C}) \to
  \Catnu^{n}(\mathcal{C})$ factors through an equivalence
  \[ \Cat^{n}(\mathcal{C}) \isoto \Catqu^{n}(\mathcal{C}).\]
\end{cor}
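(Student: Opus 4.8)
The plan is to induct on $n$, with the base case $n = 1$ being precisely \cref{qucat}. Throughout I would use the routine fact that if $\mathcal{D}$ is an \icat{} with finite limits then so are $\Cat(\mathcal{D})$ and $\Catnu(\mathcal{D}) = \Seg_{\Dinjop}(\mathcal{D})$: these are full subcategories of $\Fun(\Dop, \mathcal{D})$ and $\Fun(\Dinjop, \mathcal{D})$ spanned by objects satisfying a Segal (hence limit) condition, and such conditions are stable under finite limits since finite limits commute with the fibre products appearing in the Segal maps. In particular every $\Cat^{k}(\mathcal{C})$ has finite limits, so the iterated constructions $\Cat^{n}$, $\Catnu^{n}$, $\Catqu^{n}$ are all defined.

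For the inductive step I would assume that the restriction functor $\Cat^{n-1}(\mathcal{C}) \to \Catnu^{n-1}(\mathcal{C})$ factors through an equivalence $e_{n-1} \colon \Cat^{n-1}(\mathcal{C}) \isoto \Catqu^{n-1}(\mathcal{C})$. Since $e_{n-1}$ is an equivalence, $\Catqu^{n-1}(\mathcal{C})$ inherits finite limits from $\Cat^{n-1}(\mathcal{C})$, so both $\Cat(\blank)$ and $\Catqu(\blank)$ may be applied to it. Applying $\Cat(\blank)$ to $e_{n-1}$ gives an equivalence $\Cat(e_{n-1}) \colon \Cat^{n}(\mathcal{C}) = \Cat(\Cat^{n-1}(\mathcal{C})) \isoto \Cat(\Catqu^{n-1}(\mathcal{C}))$, and then \cref{qucat} applied to $\mathcal{D} = \Catqu^{n-1}(\mathcal{C})$ supplies an equivalence $\Cat(\Catqu^{n-1}(\mathcal{C})) \isoto \Catqu(\Catqu^{n-1}(\mathcal{C})) = \Catqu^{n}(\mathcal{C})$. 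Composing these two yields the desired equivalence $\Cat^{n}(\mathcal{C}) \isoto \Catqu^{n}(\mathcal{C})$.

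The remaining point — which I expect to be the main, if minor, obstacle — is to verify that this composite is genuinely the factorization of the iterated restriction functor $R_{n} \colon \Cat^{n}(\mathcal{C}) \to \Catnu^{n}(\mathcal{C})$, rather than merely an abstract equivalence. I would write $R_{n} = \Catnu(R_{n-1}) \circ j^{*}$, where $j^{*} \colon \Cat(\Cat^{n-1}(\mathcal{C})) \to \Catnu(\Cat^{n-1}(\mathcal{C}))$ is restriction along $\Dinjop \hookrightarrow \Dop$ and $\Catnu(R_{n-1})$ is postcomposition with the limit-preserving functor $R_{n-1}$, and then use that $j^{*}$ is natural in the coefficient \icat{} along limit-preserving functors to slide the inner equivalence $e_{n-1}$ past $j^{*}$. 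Unwinding, $R_{n}$ becomes the composite of $\Cat(e_{n-1})$, the factorization of $j^{*}_{\Catqu^{n-1}(\mathcal{C})}$ through $\Catqu(\Catqu^{n-1}(\mathcal{C}))$ provided by \cref{qucat}, and the subcategory inclusion $\Catqu^{n}(\mathcal{C}) = \Catqu(\Catqu^{n-1}(\mathcal{C})) \hookrightarrow \Catnu(\Catnu^{n-1}(\mathcal{C})) = \Catnu^{n}(\mathcal{C})$; the first two pieces compose to the equivalence $e_{n}$ constructed above, and the last is exactly the inclusion realizing $\Catqu^{n}(\mathcal{C})$ as the intended subcategory of $\Catnu^{n}(\mathcal{C})$. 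Checking this naturality of $j^{*}$ and confirming that postcomposition with $\Catqu^{n-1}(\mathcal{C}) \hookrightarrow \Catnu^{n-1}(\mathcal{C})$ identifies the two descriptions of $\Catqu^{n}(\mathcal{C}) \subseteq \Catnu^{n}(\mathcal{C})$ is the only real bookkeeping; everything else is a formal consequence of \cref{qucat} together with the stability of finite limits under the $\Cat$-construction.
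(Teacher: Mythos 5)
Your proposal is correct and is essentially the paper's argument: the paper's entire proof of \cref{nuple} is the single phrase ``applying \cref{qucat} inductively,'' and your write-up is precisely that induction with the implicit details (stability of finite limits under $\Cat(\blank)$, transport along the equivalence for $n-1$, and the identification of the resulting equivalence with the factorization of the restriction functor) made explicit. The bookkeeping points you flag are exactly the ones the paper leaves to the reader, and your handling of them is sound.
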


The $n$-uple category objects in the \icat{} $\mathcal{S}$ of spaces
are known as $n$-uple Segal spaces. By imposing constancy conditions,
we can restrict to the class of \emph{$n$-fold Segal spaces}
\cite{BarwickThesis}, which model (the algebraic structure of)
$(\infty,n)$-categories. This notion makes sense more generally:
\begin{defn}
  A 1-fold Segal object in $\mathcal{C}$ is a category object. For
  $n > 1$, an $n$-fold Segal object is an $n$-uple category object $X$
  such that
  $X_{0,\bullet,\ldots,\bullet} \colon \simp^{n-1,\op} \to
  \mathcal{C}$ is constant and $X_{i,\bullet,\ldots,\bullet}$ is an
  $(n-1)$-fold Segal object for all $i > 0$.
\end{defn}
If we write $\Seg^{n}(\mathcal{C})$ for the full subcategory of
$\Cat^{n}(\mathcal{C})$ spanned by the $n$-fold Segal objects, and
$\Seg^{n}_{\txt{nu}}(\mathcal{C})$ for the analogously defined
non-unital $n$-fold Segal objects, restricting the equivalence of \cref{nuple} gives:
\begin{cor}\label{Segnnueq}
  The restriction functor $\Seg^{n}(\mathcal{C}) \to
  \Segnu^{n}(\mathcal{C})$ factors through an equivalence
  \[ \Seg^{n}(\mathcal{C}) \isoto \Segqu^{n}(\mathcal{C}),\]
  where $\Segqu^{n}(\mathcal{C})$ is the subcategory of
  non-unital $n$-fold Segal spaces that are quasi-unital (when viewed
  as $n$-uple category objects) and quasi-unital morphisms between
  them.
\end{cor}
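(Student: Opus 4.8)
The plan is to deduce \cref{Segnnueq} by simply restricting the equivalence $\Cat^{n}(\mathcal{C}) \isoto \Catqu^{n}(\mathcal{C})$ of \cref{nuple} to the relevant subcategories. By definition $\Seg^{n}(\mathcal{C})$ is a full subcategory of $\Cat^{n}(\mathcal{C})$, and $\Segqu^{n}(\mathcal{C})$ is the full subcategory of $\Catqu^{n}(\mathcal{C})$ on those quasi-unital $n$-uple category objects that are also non-unital $n$-fold Segal objects (all morphisms in $\Catqu^{n}(\mathcal{C})$ between such objects being quasi-unital). An equivalence restricts to an equivalence between full subcategories as soon as it matches them up on objects, and the quasi-unital morphisms are then automatically taken care of by \cref{nuple}. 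So the entire task reduces to showing, for $X \in \Cat^{n}(\mathcal{C})$, that $X$ is an $n$-fold Segal object if and only if its image $j^{*}X$ under the forgetful functor is a (non-unital) $n$-fold Segal object.

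I would argue this by induction on $n$, the base case $n = 1$ being \cref{qucat} together with the identifications $\Seg^{1} = \Cat$, $\Segnu^{1} = \Catnu$, $\Segqu^{1} = \Catqu$. For the inductive step I would unwind the two defining conditions of an $n$-fold Segal object: the Segal (pullback) conditions making it an $n$-uple category object, and the constancy of $X_{0,\bullet,\ldots,\bullet}$ together with the $(n-1)$-fold Segal condition on the slices $X_{i,\bullet,\ldots,\bullet}$ for $i > 0$. The Segal conditions cause no trouble, since they are phrased entirely in terms of the inert maps $\rho_{i,j}$, which are injective; hence the relevant values and comparison maps are literally unchanged when passing between the unital and non-unital pictures. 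The condition on the slices transfers by the inductive hypothesis: each slice $X_{i,\bullet,\ldots,\bullet}$ is a genuine $(n-1)$-uple category object whose restriction $j^{*}X_{i,\bullet,\ldots,\bullet}$ is automatically quasi-unital, so by the inductive equivalence $\Seg^{n-1}(\mathcal{C}) \isoto \Segqu^{n-1}(\mathcal{C})$ it is $(n-1)$-fold Segal exactly when its non-unital restriction is.

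The crux is the constancy condition, and this is the step I expect to be the main obstacle. Forgetting identities sends a constant functor $\simp^{n-1,\op} \to \mathcal{C}$ to its restriction along $\simp_{\txt{inj}}^{n-1,\op} \hookrightarrow \simp^{n-1,\op}$, which is again constant in the non-unital sense (all injective maps act by equivalences), so $j^{*}$ preserves constancy. The content is in the converse: I must show that a category object $Z \colon \Dop \to \mathcal{C}$ whose restriction to $\Dinjop$ sends every injective map to an equivalence is already constant. This is forced by the Segal condition: the face maps $d_{0}, d_{1} \colon Z_{1} \to Z_{0}$ are injective, hence equivalences, so $Z_{1} \simeq Z_{0}$ and then $Z_{k} \simeq Z_{0}$ for all $k$; moreover the degeneracy $s_{0}$ is an equivalence since $d_{0}s_{0} \simeq \id_{Z_{0}}$ with $d_{0}$ invertible. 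Applying this in each of the directions $2, \ldots, n$ shows that the unital constancy of $X_{0,\bullet,\ldots,\bullet}$ is detected on the injective part, so the inverse of the equivalence of \cref{nuple} carries a quasi-unital non-unital $n$-fold Segal object back to a genuine $n$-fold Segal object.

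With both implications established, the equivalence of \cref{nuple} restricts to the asserted equivalence $\Seg^{n}(\mathcal{C}) \isoto \Segqu^{n}(\mathcal{C})$, completing the induction.
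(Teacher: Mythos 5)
Your proposal is correct and follows the same route as the paper, which simply obtains \cref{Segnnueq} by restricting the equivalence of \cref{nuple} to the full subcategories of $n$-fold Segal objects, with no further argument given. The one point you rightly isolate as the only nontrivial content---that constancy of $X_{0,\bullet,\ldots,\bullet}$ is detected after restriction to $\Dinjop$, since the face maps are induced by injective maps and the degeneracies are sections of them---is exactly the detail the paper leaves implicit, and your verification of it is sound.
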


The \icat{} $\Cat_{(\infty,n)}$ of $(\infty,n)$-categories can be
described as the full subcategory
$\CSS^{n}(\mathcal{S}) \subseteq \Seg^{n}(\mathcal{S})$ consisting of
the \emph{complete} $n$-fold Segal spaces. The equivalence of \cref{Segnnueq} thus
identifies $\Cat_{(\infty,n)}$ this with a certain full subcategory of
$\Segqu^{n}(\mathcal{S})$. This full subcategory can also be described
without reference to this equivalence, as follows:

\begin{defn}
  If $X$ is a non-unital Segal space, we write $X(x,y)$ for the fibre
  of $(d_{1},d_{0}) \colon X_{1} \to X_{0} \times X_{0}$ at
  $(x,y)$; we refer to the points of $X(x,y)$ as \emph{morphisms} from
  $x$ to $y$, and denote these in the usual way. If $X$ is
  quasi-unital with quasi-unit $u \colon X_{0}\to X_{1}$ we say that a morphism $f \colon
  x \to y$ is an \emph{equivalence} in $X$ if there exists $g \colon y
  \to x$ such that $g \circ f \simeq u(x)$ and $f \circ g \simeq
  u(y)$. Let $X_{1}^{\txt{eq}}$ denote the subspace of $X_{1}$
  containing those components that correspond to equivalences. It is
  clear from the definition of a quasi-unit that $u$ restricts to a
  map $X_{0}\to X_{1}^{\txt{eq}}$, and we say that
  $X$ is \emph{complete} if this map is an equivalence in
  $\mathcal{S}$.
\end{defn}

\begin{remark}\label{rmk:eqcenu}
  More generally, we can define an equivalence in a non-unital Segal
  space $X$ to be a morphism $f \colon x \to y$ such that the
  morphisms
  \[ f^{*} \colon X(y,z) \to X(x,z), \quad f_{*} \colon X(z,x) \to
    X(z,y) \]
  given by composition with $f$ are equivalences in $\mathcal{S}$ for
  all $z \in X_{0}$. It is easy to see that this is equivalent to the
  previous definition when $X$ is quasi-unital. Moreover, if $X$ is
  quasi-unital then $X$ is complete \IFF{} either (or both) of the
  face maps $X_{1}^{\txt{eq}} \to X_{0}$ is an equivalences, since the
  quasi-unit is always a section of both. Thus we can characterize the
  complete quasi-unital Segal spaces without explicitly referring to
  the quasi-unit.
\end{remark}

\begin{defn}
  Suppose $X \colon \Dinj^{n,\op} \to \mathcal{S}$ is an $n$-fold
  quasi-unital Segal space. Then $X$ is \emph{complete} if
  \begin{itemize}
  \item the quasi-unital Segal space $X_{\bullet,0,\ldots,0}$ is
    complete,
  \item the quasi-unital $(n-1)$-fold Segal space
    $X_{1,\bullet,\ldots,\bullet}$ is complete.
  \end{itemize}
  Equivalently, $X$ is complete if the $n$ quasi-unital Segal spaces
  of the form $X_{1,\ldots,1,\bullet,0,\ldots,0}$ are all complete. We
  write $\CSSqu^{n}(\mathcal{S})$ for the full subcategory of
  $\Segqu^{n}(\mathcal{S})$ spanned by the complete quasi-unital
  $n$-fold Segal spaces.
\end{defn}

\begin{cor}\label{CSSnqueq}
  The restriction functor $\Seg^{n}(\mathcal{S}) \to
  \Segnu^{n}(\mathcal{S})$ restricts to an equivalence
  \[ \CSS^{n}(\mathcal{S}) \to \CSSqu^{n}(\mathcal{S}). \]
\end{cor}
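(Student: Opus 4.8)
The plan is to deduce this directly from the equivalence $\Seg^{n}(\mathcal{S}) \isoto \Segqu^{n}(\mathcal{S})$ of \cref{Segnnueq}. Since $\CSS^{n}(\mathcal{S})$ and $\CSSqu^{n}(\mathcal{S})$ are by definition \emph{full} subcategories of $\Seg^{n}(\mathcal{S})$ and $\Segqu^{n}(\mathcal{S})$ respectively, restricting this equivalence to $\CSS^{n}(\mathcal{S})$ automatically yields a fully faithful functor into $\Segqu^{n}(\mathcal{S})$. To conclude that it is an equivalence onto $\CSSqu^{n}(\mathcal{S})$, I only need to match objects: I must check that a unital $n$-fold Segal space $X$ is complete \IFF{} its underlying quasi-unital $n$-fold Segal space $j^{*}X$ is complete.

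First I would treat the case $n = 1$, which is the crux. The key observation is that the restriction functor $j^{*}$ alters neither the space $X_{1}$, nor the face maps $d_{0},d_{1} \colon X_{1} \to X_{0}$, nor the composition $X_{2} \simeq X_{1}\times_{X_{0}}X_{1} \to X_{1}$; moreover the (essentially unique) quasi-unit of $j^{*}X$ is the original degeneracy $s_{0} \colon X_{0} \to X_{1}$. Consequently the maps $f^{*}$ and $f_{*}$ given by composition with a morphism $f$ agree for $X$ and for $j^{*}X$, so the intrinsic, quasi-unit-free notion of equivalence from \cref{rmk:eqcenu} picks out literally the same subspace $X_{1}^{\txt{eq}} \subseteq X_{1}$ in both settings. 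By \cref{rmk:eqcenu} the completeness of the quasi-unital $j^{*}X$ is equivalent to asking that one of the face maps $X_{1}^{\txt{eq}} \to X_{0}$ be an equivalence; since $s_{0}$ is a section of both face maps, this is precisely Rezk completeness of the unital $X$. Hence $X$ is complete \IFF{} $j^{*}X$ is complete.

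For general $n$ I would reduce to this case: both notions of completeness are phrased in parallel, as the completeness of the $n$ one-dimensional (quasi-unital) Segal spaces of the form $X_{1,\ldots,1,\bullet,0,\ldots,0}$. Since $j^{*}$ acts levelwise and commutes with passing to these slices, the $n=1$ result applies to each slice, so completeness is both preserved and reflected; the equivalence of \cref{Segnnueq} therefore restricts to the desired equivalence $\CSS^{n}(\mathcal{S}) \isoto \CSSqu^{n}(\mathcal{S})$. The one genuinely substantive step — and the only real obstacle — is the identification of the subspace of equivalences $X_{1}^{\txt{eq}}$ before and after restriction; this rests entirely on the quasi-unit-free characterization recorded in \cref{rmk:eqcenu}, and once that is invoked the remainder is formal, being a matter of observing that $j^{*}$ leaves the relevant mapping spaces and composition untouched and then using fullness of the two subcategories.
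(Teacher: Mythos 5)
Your proposal is correct and follows the same route as the paper: \cref{Segnnueq} plus the observation that completeness of a quasi-unital $n$-fold Segal space, tested on the Segal spaces $X_{1,\ldots,1,\bullet,0,\ldots,0}$ with quasi-unit $s_{0}$, coincides with the usual (Rezk) completeness of the underlying unital object. The paper simply asserts this correspondence of completeness notions as clear, whereas you spell out the $n=1$ verification via the invariance of $X_{1}^{\txt{eq}}$; this is a harmless elaboration, not a different argument.
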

\begin{proof}
  This is immediate from \cref{Segnnueq}, since the definition of
  completeness for a quasi-unital $n$-fold Segal space clearly
  restricts to the usual notion of completeness for an $n$-fold Segal
  space.
\end{proof}

\begin{warning}\label{warn:Harpaz}
  Our notion of a quasi-unital Segal space is not quite the same as
  that studied by Harpaz~\cite{HarpazQUnital}. Let us say
  that a morphism $u \colon x \to x$ in a non-unital Segal space $X$
  is a \emph{quasi-identity} if there are equivalences
  $f \circ u \simeq f$ for $f$ in $X(x,y)$ and $u \circ g \simeq g$
  for $g$ in $X(y,x)$, natural in $f$ and $g$ (more precisely, for all
  $y$ the maps $u_{*} \colon X(y,x) \to X(y,x)$ and
  $u^{*} \colon X(x,y) \to X(x,y)$ are equivalent to the corresponding
  identities). We can then call $X$ \emph{weakly quasi-unital}
  if for every object $x$ there exists a quasi-identity $u \colon x
  \to x$; these objects correspond to those called quasi-unital in
  \cite{HarpazQUnital}. In general this seems to be a strictly
  weaker notion than ours, which essentially requires there to exist a
  choice of quasi-identities $u_{x} \colon x \to x$ that is continuous
  in $x \in X_{0}$; we expect that this stronger condition is needed
  to get an equivalence between Segal spaces and quasi-unital Segal
  spaces (whereas Harpaz only gets an equivalence between the
  subcategories of complete objects). 
\end{warning}

\begin{remark}
  We can define a non-unital Segal space $X$ to be \emph{complete} if
  the two face maps $X_{1}^{\text{eq}} \to X_{0}$ are both
  equivalences, following \cref{rmk:eqcenu} and
  \cite{HarpazQUnital}*{\S 3}; it is easy to see
  that if $X$ is complete then it is also weakly
  quasi-unital. It follows from Harpaz's work that every
  complete non-unital Segal
  space is in the image of the restriction functor from complete Segal
  spaces, and thus is in particular quasi-unital in our
  sense. Similarly, a morphism of complete non-unital Segal spaces is
  quasi-unital \IFF{} it preserves quasi-identities. 
\end{remark}

\begin{bibdiv}
\begin{biblist}
\bib{BarwickThesis}{book}{
  author={Barwick, Clark},
  title={$(\infty ,n)$-{C}at as a closed model category},
  note={Thesis (Ph.D.)--University of Pennsylvania},
  date={2005},
}

\bib{BarwickMackey}{article}{
  author={Barwick, Clark},
  title={Spectral {M}ackey functors and equivariant algebraic $K$-theory ({I})},
  journal={Adv. Math.},
  volume={304},
  date={2017},
  pages={646--727},
  eprint={arXiv:1404.0108},
  year={2014},
}

\bib{BurroniTCat}{article}{
  author={Burroni, Albert},
  title={$T$-cat\'{e}gories (cat\'{e}gories dans un triple)},
  language={French},
  journal={Cahiers Topologie G\'{e}om. Diff\'{e}rentielle},
  volume={12},
  date={1971},
  pages={215--321},
}

\bib{BenabouBicat}{article}{
  author={B\'{e}nabou, Jean},
  title={Introduction to bicategories},
  conference={ title={Reports of the Midwest Category Seminar}, },
  book={ publisher={Springer, Berlin}, },
  date={1967},
  pages={1--77},
}

\bib{CruttwellShulman}{article}{
  author={Cruttwell, G. S. H.},
  author={Shulman, Michael A.},
  title={A unified framework for generalized multicategories},
  journal={Theory Appl. Categ.},
  volume={24},
  date={2010},
  pages={No. 21, 580--655},
}

\bib{DyckerhoffKapranovTwoSeg}{book}{
  eprint={arXiv:1212.3563},
  author={Dyckerhoff, Tobias},
  author={Kapranov, Mikhail},
  title={Higher Segal spaces},
  series={Lecture Notes in Mathematics},
  volume={2244},
  publisher={Springer, Cham},
  date={2019},
}

\bib{FioreGambinoKock}{article}{
  author={Fiore, Thomas M.},
  author={Gambino, Nicola},
  author={Kock, Joachim},
  title={Monads in double categories},
  journal={J. Pure Appl. Algebra},
  volume={215},
  date={2011},
  number={6},
  pages={1174--1197},
}

\bib{GaitsgoryRozenblyum1}{book}{
  author={Gaitsgory, Dennis},
  author={Rozenblyum, Nick},
  title={A study in derived algebraic geometry. Vol. I. Correspondences and duality},
  series={Mathematical Surveys and Monographs},
  volume={221},
  publisher={American Mathematical Society, Providence, RI},
  date={2017},
  note={Available from \url {http://www.math.harvard.edu/~gaitsgde/GL/}.},
}

\bib{enr}{article}{
  author={Gepner, David},
  author={Haugseng, Rune},
  title={Enriched $\infty $-categories via non-symmetric $\infty $-operads},
  journal={Adv. Math.},
  volume={279},
  pages={575--716},
  eprint={arXiv:1312.3178},
  date={2015},
}

\bib{freepres}{article}{
  author={Gepner, David},
  author={Haugseng, Rune},
  author={Nikolaus, Thomas},
  title={Lax colimits and free fibrations in $\infty $-categories},
  eprint={arXiv:1501.02161},
  journal={Doc. Math.},
  volume={22},
  date={2017},
  pages={1225--1266},
}

\bib{GrandisPareLimits}{article}{
  author={Grandis, Marco},
  author={Par\'e, Robert},
  title={Limits in double categories},
  journal={Cahiers Topologie G\'{e}om. Diff\'{e}rentielle Cat\'{e}g.},
  volume={40},
  date={1999},
  number={3},
  pages={162--220},
}

\bib{HarpazQUnital}{article}{
  author={Harpaz, Yonatan},
  title={Quasi-unital $\infty $-categories},
  journal={Algebr. Geom. Topol.},
  volume={15},
  date={2015},
  number={4},
  pages={2303--2381},
}

\bib{spans}{article}{
  author={Haugseng, Rune},
  title={Iterated spans and classical topological field theories},
  journal={Math. Z.},
  volume={289},
  issue={3},
  pages={1427--1488},
  date={2018},
  eprint={arXiv:1409.0837},
}

\bib{nmorita}{article}{
  author={Haugseng, Rune},
  title={The higher {M}orita category of $E_{n}$-algebras},
  date={2017},
  eprint={arXiv:1412.8459},
  journal={Geom. Topol.},
  volume={21},
  issue={3},
  pages={1631--1730},
}

\bib{JoyalKockUnits}{article}{
  author={Joyal, Andr\'{e}},
  author={Kock, Joachim},
  title={Coherence for weak units},
  journal={Doc. Math.},
  volume={18},
  date={2013},
  pages={71--110},
}

\bib{LeinsterGenEnr}{article}{
  author={Leinster, Tom},
  title={Generalized enrichment of categories},
  note={Category theory 1999 (Coimbra)},
  journal={J. Pure Appl. Algebra},
  volume={168},
  date={2002},
  number={2-3},
  pages={391--406},
}

\bib{LeinsterHigherOpds}{book}{
  author={Leinster, Tom},
  title={Higher operads, higher categories},
  series={London Mathematical Society Lecture Note Series},
  volume={298},
  publisher={Cambridge University Press},
  place={Cambridge},
  date={2004},
  pages={xiv+433},
}

\bib{HTT}{book}{
  author={Lurie, Jacob},
  title={Higher Topos Theory},
  series={Annals of Mathematics Studies},
  publisher={Princeton University Press},
  address={Princeton, NJ},
  date={2009},
  volume={170},
  note={Available from \url {http://math.ias.edu/~lurie/}},
}

\bib{HA}{book}{
  author={Lurie, Jacob},
  title={Higher Algebra},
  date={2017},
  note={Available at \url {http://math.ias.edu/~lurie/}.},
}

\bib{LyubashenkoManzyukUnital}{article}{
  author={Lyubashenko, Volodymyr},
  author={Manzyuk, Oleksandr},
  title={Unital $A_{\infty }$-categories},
  conference={ title={Problems of Topology and Related Questions}, },
  book={ series={Proc. of Inst. of Mathematics NASU}, volume={3}, publisher={Inst. of Mathematics, Nat. Acad. Sci. Ukraine}, place={Kyiv}, },
  date={2006},
  pages={235--268},
  eprint={arXiv:0802.2885},
}

\bib{RezkCSS}{article}{
  author={Rezk, Charles},
  title={A model for the homotopy theory of homotopy theory},
  journal={Trans. Amer. Math. Soc.},
  volume={353},
  date={2001},
  number={3},
  pages={973--1007},
}

\bib{ShulmanFramed}{article}{
  author={Shulman, Michael},
  title={Framed bicategories and monoidal fibrations},
  date={2008},
  journal={Theory Appl. Categ.},
  volume={20},
  pages={No. 18, 650\ndash 738},
}

\bib{SteimleDeg}{article}{
  author={Steimle, Wolfgang},
  title={Degeneracies in quasi-categories},
  journal={J. Homotopy Relat. Struct.},
  volume={13},
  date={2018},
  number={4},
  pages={703--714},
}

\bib{TanakaUnital}{article}{
  author={Tanaka, Hiro Lee},
  title={Functors (between $\infty $-categories) that aren't strictly unital},
  journal={J. Homotopy Relat. Struct.},
  volume={13},
  date={2018},
  number={2},
  pages={273--286},
}
\end{biblist}
\end{bibdiv}

\end{document}